\documentclass[11pt,a4paper]{article}

\usepackage[T1]{fontenc}
\usepackage{lmodern}
\usepackage{amsmath,amssymb,amsthm,mathtools,bm}
\usepackage{xcolor}
\usepackage[margin=1in]{geometry}
\usepackage{microtype}
\usepackage[colorlinks=true,linkcolor=blue,citecolor=green!60!black,urlcolor=blue]{hyperref}

\newcommand{\vct}[1]{\boldsymbol{#1}}
\newcommand{\R}{\mathbb{R}}
\newcommand{\Z}{\mathbb{Z}}

\newcommand{\conv}{\operatorname{conv}}
\newcommand{\aff}{\operatorname{aff}}
\newcommand{\supp}{\operatorname{supp}}

\newcommand{\Vol}{\operatorname{Vol}}
\newcommand{\NVol}{\operatorname{NVol}}
\newcommand{\SVol}{\operatorname{SVol}}
\newcommand{\Ehr}{\operatorname{Ehr}}

\theoremstyle{plain}
\newtheorem{theorem}{Theorem}[section]
\newtheorem{lemma}[theorem]{Lemma}
\newtheorem{proposition}[theorem]{Proposition}
\newtheorem{corollary}[theorem]{Corollary}
\newtheorem{conjecture}[theorem]{Conjecture}

\theoremstyle{definition}
\newtheorem{definition}[theorem]{Definition}
\newtheorem{example}[theorem]{Example}
\newtheorem{remark}[theorem]{Remark}
\newtheorem{problem}[theorem]{Problem}

\usepackage[silent, english]{babel}
\usepackage[autostyle, english = american]{csquotes}
\MakeOuterQuote{"}

\title{Counting Lattice Points in Minkowski Sums of Cross Polytopes}

\author{
    Ziyi Dai\textsuperscript{\hyperlink{author1}{1}}\and
    Qilin Hou\textsuperscript{\hyperlink{author2}{2}}\and
    Zhiyuan Liu\textsuperscript{\hyperlink{author3}{3}}\and
    Warut Thawinrak\textsuperscript{\hyperlink{author4}{4}}\and
    Hongyu Wang\textsuperscript{\hyperlink{author5}{5}}
}

\date{}

\begin{document}
\maketitle

\begin{abstract}
Motivated by Postnikov’s study of lattice-point enumeration in Minkowski sums of simplices, we investigate lattice points in Minkowski sums of cross polytopes and establish analogous results, together with several related consequences. In particular, we introduce the support-enumerator associated with Postnikov’s notion of draconian sequences and show that it coincides with the $h^*$-polynomial of the corresponding root polytope. This provides a new interpretation of the $h^*$-polynomial and yields a simple method for computing the volume of the corresponding polytope. By exploiting the symmetry of these root polytopes, we further establish a duality property for support-enumerators, which in turn provides a proof of a conjecture by Athanasiadis and Chapoton concerning the $h$-polynomials of preorders. Consequently, we obtain a formula for the number of lattice points in Minkowski sums of cross polytopes in terms of draconian sequences and show that these polytopes are Ehrhart positive. Furthermore, this formula leads to analogous expressions for the number of lattice points on their boundaries and for their surface volumes. 
\end{abstract}

\noindent\textbf{Keywords:} cross polytope, Minkowski sum, lattice point, Ehrhart polynomial, draconian sequence, generalized permutohedra, $h^*$-polynomial, root polytope

\vspace{0.3in}

\section{Introduction}\label{sec:intro}
Many important families of polytopes can be constructed as Minkowski sums (and differences) of polytopes. Consequently, studying the lattice-point enumeration of such Minkowski sums can reveal fruitful combinatorial properties of these families and open up new directions for further research. \textit{Type A generalized permutohedra} \cite{Postnikov2009}, also known simply as \textit{generalized permutohedra}, provide a prominent example. These polytopes can be constructed as Minkowski sums and differences of simplices \cite{Postnikov2009,Ardila2010} and have deep connections to several areas of mathematics, including the theory of matroids and Weyl groups. Postnikov’s influential work on their lattice-point enumeration \cite{Postnikov2009} reveals several intriguing properties of this family, which in turn inspired a wealth of subsequent research. Among these developments is the study of their type B counterparts in \cite{Euretall2024,thawinrak2025AtoB}. In fact, the study of Minkowski sums of cross polytopes in this paper is also motivated by Postnikov’s work. We elaborate on this motivation, introduce the key definitions, and highlight the main contributions of our work below.

Let $[n]$ denote the set $\{1,2,\ldots,n\}$. For $i\in[n]$, we define
$\vct{e}_i:=(0,\ldots,0,1,0,\ldots,0)\in\R^n$ to be the point with $1$ at the $i^{\mathrm{th}}$ component and $0$ at all other components. For $I\subset[n]$, we let
\[
\Delta_I^0:=\conv(\vct{0},\vct{e}_i\mid i\in I)\subset\R^n
\]
be the corresponding \textit{simplex}.

Let
\[
G=(L\sqcup R,E),\qquad
L=\{\ell_1,\ldots,\ell_m\},\qquad
R=\{r_1,\ldots,r_n\}
\]
be a bipartite graph on $m$ left vertices $\ell_1, \dots, \ell_m$ and $n$ right vertices $r_1, \dots, r_n$. We denote by $G^*$ the bipartite graph obtained by swapping the left and right vertex sets of $G$. For $j\in[m]$, we define
\begin{equation}
I_j:=\{i\in[n]\mid \ell_j\text{ is adjacent to }r_i\}
\label{eq:Ij}
\end{equation}
to be the set of (the indices of) the neighbors of $\ell_j.$ For $(y_1,\ldots,y_m)\in\R^m$, we define the polytope
\begin{equation}
P_G(y_1,\ldots,y_m):=y_1\Delta_{I_1}^0+\cdots+y_m\Delta_{I_m}^0.
\label{eq:PG}
\end{equation}
These polytopes form the family of \emph{polymatroids} \cite{edmonds2003submodular,fujishige2005submodular} and a subfamily of \emph{type B generalized permutohedra} \cite{Bastidas2021,Euretall2024}. Moreover, they can be realized simply as type A generalized permutohedra \cite{thawinrak2025AtoB}. Thus, the theory of type A generalized permutohedra developed by Postnikov in \cite{Postnikov2009} provides sufficient tools for enumerating their lattice points. 

By viewing $P_{G^*}(1,\dots,1)$ as the dual polytope of $P_{G}(1,\dots, 1),$ Postnikov shows that this duality preserves the number of lattice points. Consequently, this result allows Postnikov to derive a formula for the number of lattice points in $P_G(y_1,\dots, y_m)$ in terms of $G$-\emph{draconian sequences}, which we define in the following definition. An important consequence of Postnikov’s formula is that it makes \textit{Ehrhart positivity} of $P_G(y_1,\dots,y_m)$ immediately apparent for all nonnegative integers $y_1,\dots,y_m$.

\begin{definition}\label{def: draconion-sequence}
A sequence $(a_1,\ldots,a_m)$ of nonnegative integers is a \emph{$G$-draconian sequence} if for every nonempty subset $J\subset[m]$,
\[
\sum_{i\in J}a_i\le \left|\bigcup_{i\in J}I_i\right|.
\]
The set of all $G$-draconian sequences is denoted by $D(G)$.
\end{definition}

The purpose of this paper is to develop an analogous theory for
Minkowski sums of \textit{cross polytopes} and establish related consequences. We introduce further key notations and highlight our main contributions as follows.

For a nonempty
$I\subset[n]$, let
\[
\diamondsuit_I=\conv(\pm\vct{e}_i\mid i\in I) \subset \R^n
\]
denote the corresponding \textit{cross polytope}. Given a bipartite graph
$G$, we consider
\begin{equation}\label{eq:cross-polytope}
   Q_G(k_1,\ldots,k_m)
=
k_1\diamondsuit_{I_1}+\cdots+k_m\diamondsuit_{I_m}. 
\end{equation}
These polytopes form a natural type $B$ analogue of the Minkowski sums
of simplices. Indeed, the intersection of
$Q_G(k_1,\ldots,k_m)$ with the nonnegative orthant is precisely $P_G(k_1,\dots, k_m).$ This close relationship between simplices and cross polytopes makes the lattice-point counting problem
both natural and nontrivial. Our first step toward this enumeration is the introduction of the
\emph{support-enumerator} of $D(G)$ 
\begin{equation}
F_G(z):=\sum_{\vct{a}\in D(G)}z^{s(\vct{a})}
\label{eq:FG}
\end{equation}
where $s(a)$ is
the number of positive coordinates of $a$. As one of our main results, we show that the support-enumerator is not only an auxiliary counting device, but also coincides with the $h^*$-polynomial of a certain root polytope $R_{\widehat G}$ associated with the \textit{augmented bipartite graph} $\widehat G$ of $G$. See Section \ref{sec:h-polynomial} for the precise definitions.
\begin{theorem}[Realization as $h^*$-polynomial]\label{thm:F_G-h-polynomial}
For every bipartite graph $G$ on $m$ left and $n$ right vertices, the Ehrhart series of the root polytope $R_{\widehat G}$ defined in \eqref{eq:root-polytope} is given by
\begin{equation}
\Ehr_{\mathcal R_{\widehat G}}(z)
=\frac{F_G(z)}{(1-z)^{m+n+1}},
\label{eq:root-ehrhart}
\end{equation}
where $F_{G}(z)$ is the support-enumerator defined in \eqref{eq:FG}.
Equivalently, the $h^*$-polynomial of $R_{\widehat G}$ is
\begin{equation}
h^*_{\mathcal R_{\widehat G}}(z)=F_G(z).
\label{eq:hstar-F}
\end{equation}
\end{theorem}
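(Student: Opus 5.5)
The plan is to compute the Ehrhart polynomial $t\mapsto|t\mathcal R_{\widehat G}\cap\Z^{m+n+2}|$ directly and then read off the numerator of its generating function. I will use the augmentation from Section~\ref{sec:h-polynomial}, which I expect to adjoin one extra left vertex $\ell_0$ adjacent to every right vertex and one extra right vertex $r_0$ adjacent to every left vertex. With $m+n+2$ vertices in a connected bipartite graph, the root polytope $\mathcal R_{\widehat G}=\conv(\vct e_{\ell}-\vct e_{r}\mid \ell r\in E(\widehat G))$ has dimension $m+n$, which matches the denominator $(1-z)^{m+n+1}$. So the goal is the identity
\[
|t\mathcal R_{\widehat G}\cap\Z^{m+n+2}|=\sum_{\vct a\in D(G)}\binom{t-s(\vct a)+m+n}{m+n}\qquad(t\ge 0),
\]
because summing $\binom{t-s+m+n}{m+n}z^t$ over $t$ gives $z^{s}/(1-z)^{m+n+1}$. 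Summing over $\vct a\in D(G)$ then gives \eqref{eq:root-ehrhart}, and \eqref{eq:hstar-F} follows immediately.

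\textbf{Step 1 (degree vectors).} Root polytopes of bipartite graphs have unimodular triangulations given by spanning trees, so the lattice points of $t\mathcal R_{\widehat G}$ are exactly the integer points $t\vct x$ with $\vct x$ a sum of $t$ edge vectors. Equivalently, they are the pairs of left and right degree vectors of multigraphs on $\widehat G$ with exactly $t$ edges. I will characterize realizable degree pairs with a Hall/Gale--Ryser type condition: a pair $(\vct u,\vct v)$ with $|\vct u|=|\vct v|=t$ is realizable iff $\sum_{j\in J}u_j\le\sum_{i\in N(J)}v_i$ for all left subsets $J$. Because $\ell_0$ and $r_0$ are joined to everything, all constraints involving $J\ni\ell_0$, or any $J$ whose neighborhood contains $r_0$, become trivial or reduce to the total count. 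Only the constraints for $J\subset[m]$ inside $G$ survive, and each of these is buffered by $v_{r_0}$.

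\textbf{Step 2 (bijection with draconian data).} Next I will set up a decomposition of each realizable pair $(\vct u,\vct v)$ into a draconian sequence $\vct a\in D(G)$ together with an unrestricted composition. Heuristically, $\vct a$ records the "essential" part of the left degrees that must be absorbed by genuine $G$-edges rather than by $r_0$. One unit of degree on each support vertex of $\vct a$ is forced, which accounts for the shift $t\mapsto t-s(\vct a)$. The remaining $t-s(\vct a)$ units are distributed freely among $m+n+1$ slots, which accounts for $\binom{t-s(\vct a)+m+n}{m+n}$. To make this precise, I would first try to use Postnikov's formula expressing $|P_G(y_1,\dots,y_m)\cap\Z^n|$ as a sum over $D(G)$ of products of binomials, by viewing the slice of $t\mathcal R_{\widehat G}$ with fixed left degrees $(u_0,\dots,u_m)$ as a copy of $P_G(u_1,\dots,u_m)$ together with the free contributions of $\ell_0$ and $r_0$. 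I would then sum over compositions $u_0+\dots+u_m=t$ and collapse the resulting sum with Vandermonde's identity.

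\textbf{Main obstacle.} The hard part is Step 2: matching the fiber count exactly, and in particular showing that the Vandermonde summation produces precisely $\binom{t-s(\vct a)+m+n}{m+n}$, with the exponent $s(\vct a)$ and not some other statistic of $\vct a$. If the direct summation is messy, my fallback is a half-open decomposition. I would choose a shellable unimodular triangulation of $\mathcal R_{\widehat G}$ by spanning trees of $\widehat G$ (for instance Postnikov's noncrossing trees with $\ell_0,r_0$ placed first), so that $h^*(z)=\sum_T z^{\#\text{removed facets of }T}$. I would then construct a bijection between the trees $T$ and $D(G)$, sending $T$ to its left-degree-minus-one vector restricted to $G$ (the standard correspondence of Postnikov, Lemma on draconian sequences and trees). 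Finally I would check that the number of removed facets equals $s(\vct a)$.
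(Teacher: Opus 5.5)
\textbf{Verdict: genuine gap.} Your main route is the paper's route: slice $t\mathcal R_{\widehat G}$ by the left-degree vector $(u_0,\dots,u_m)$, count each slice with Postnikov's formula, and sum over compositions of $t$. But the proposal stops exactly where the substance of the proof lies, and the version you sketch would not close as stated.

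The slice is not ``$P_G(u_1,\dots,u_m)$ together with free contributions of $\ell_0$ and $r_0$.'' Only the $r_0$-edges are free: they are the slack turning each row sum into the inequality $\sum_j w^{(i)}_j\le u_i$. The $\ell_0$-edges instead contribute a Minkowski summand, so the slice is $P_{G^+}(u_0,\dots,u_m)=u_0\Delta^0_{[n]}+\sum_{i\ge 1}u_i\Delta^0_{I_i}$, where $G^+$ is $\widehat G$ with $r_0$ deleted. Lattice points of a Minkowski sum do not factor, so Postnikov's formula must be applied to $G^+$, and the resulting sum is indexed by $D(G^+)$, not $D(G)$.

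Summing over compositions is your Vandermonde step. Equivalently, $\sum_{\alpha\ge0}\binom{\alpha+a-1}{a}z^\alpha=z^{\mathbf 1_{a>0}}/(1-z)^{a+1}$, which is the precise form of your ``one forced unit per support vertex'' heuristic. This gives $\sum_{\vct{a}\in D(G^+)}z^{s(\vct{a})}/(1-z)^{m+1+|\vct{a}|}$. Here the power of $1-z$ still varies with $|\vct{a}|$, and $s(\vct{a})$ also counts the $\ell_0$-coordinate. So the convolution over compositions alone cannot produce $\sum_{\vct{a}\in D(G)}\binom{t-s(\vct{a})+m+n}{m+n}$. This is exactly the obstacle you flagged, and it is left unresolved.

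Two ingredients are missing:
\begin{itemize}
\item The description $D(G^+)=\{(a_0,\vct{b})\mid \vct{b}\in D(G),\ 0\le a_0\le n-|\vct{b}|\}$, which holds because $\ell_0$ is adjacent to all of $[n]$.
\item For fixed $\vct{b}$ and $N=n-|\vct{b}|$, the telescoping identity $(1-z)^N+z\sum_{r=0}^{N-1}(1-z)^r=1$ (a hockey-stick sum in binomial language). It collapses the sum over $a_0$ to exactly $z^{s(\vct{b})}/(1-z)^{m+n+1}$.
\end{itemize}
This second summation is where both the uniform denominator and the statistic $s(\vct{b})$ come from: the extra factor $z$ contributed when $a_0>0$ is absorbed.

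Your fallback is the K\'alm\'an--Postnikov route, which the paper mentions as an alternative, and it can be made to work. With $\ell_0$ ordered first, the hypertrees are $(n-|\vct{b}|,\vct{b})$ with $\vct{b}\in D(G)$. Moreover, $\ell_i$ is internally inactive iff $b_i>0$, since one unit of valence can then be moved to $\ell_0$ and the result is still a hypertree. As written, however, you only assert that the number of removed facets equals $s(\vct{a})$. To close it, invoke their theorem identifying $h^*_{\mathcal R_{\widehat G}}$ with the interior polynomial together with this activity check, rather than leaving the shelling and the facet count unverified.

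A minor point in Step 1: every nonempty $J\subseteq[m]$ has $r_0$ in its neighborhood, so those constraints are not trivial. They are exactly the surviving ones, $\sum_{j\in J}u_j\le v_{r_0}+\sum_{i\in I_J}v_i$.
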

This result gives a new combinatorial interpretation of the
$h^*$-polynomial of a class of root polytopes, complementing and simplifying its existing interpretations as an \emph{interior polynomial} established by K\'alm\'an and
Postnikov \cite{KalmanPostnikov2017}, and certain statistics on the \textit{dissecting spanning trees} of $G$ introduced by K\'alm\'an and T\'othm\'er\'esz in \cite{KalmanTothmeresz2023}. Moreover, it connects the statistics on the supports with several other combinatorial objects, raising the question of what further combinatorial interpretations may be encoded by these support-enumerators.

By employing the symmetry of the root polytope $R_{\widehat G},$ we deduce the following duality for support-enumerators.

\begin{theorem}[Duality for support-enumerator]\label{thm:duality-support-enumerator}
For every bipartite graph $G$, we have
\begin{equation}
F_G(z)=F_{G^*}(z),
\label{eq:duality}
\end{equation}
where $F_{G}(z)$ is the support-enumerator defined in \eqref{eq:FG}, and $G^*$ is the bipartite graph obtained from $G$ by swapping its left and right vertex sets. 
\end{theorem}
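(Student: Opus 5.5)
The plan is to derive the duality from Theorem~\ref{thm:F_G-h-polynomial} by showing that the root polytopes $\mathcal R_{\widehat G}$ and $\mathcal R_{\widehat{G^*}}$ are unimodularly equivalent. Unimodularly equivalent lattice polytopes have the same Ehrhart series. Both augmented graphs have $m+n+1$ vertices in total, so the denominators in \eqref{eq:root-ehrhart} coincide, and equality of the numerators gives $F_G(z)=h^*_{\mathcal R_{\widehat G}}(z)=h^*_{\mathcal R_{\widehat{G^*}}}(z)=F_{G^*}(z)$.

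For the construction I assume that $\widehat G$ adds one new vertex on each side. Concretely, a left vertex $\ell_0$ is joined to every right vertex and to a new right vertex $r_0$, and $r_0$ is joined to every left vertex. I also assume the root polytope is $\mathcal R_{\widehat G}=\conv(\vct e_{\ell}-\vct e_{r}\mid \ell r\in E(\widehat G))$, sitting in the affine hyperplane of $\R^{L\sqcup R}$ where the left coordinates sum to $1$. Under these assumptions the construction is symmetric in the two sides: $\widehat{G^*}$ is exactly $\widehat G$ with left and right interchanged, that is, $\widehat{G^*}=(\widehat G)^*$. (If the paper's augmentation is one-sided, I would first check that it is symmetric up to relabeling; the remaining steps would be unchanged.)

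The core step is therefore to show that $\mathcal R_{H}$ and $\mathcal R_{H^*}$ are unimodularly equivalent for any bipartite graph $H$. The map $\vct x\mapsto -\vct x$, followed by relabeling left coordinates as right ones and vice versa, sends $\vct e_\ell-\vct e_r$ to $\vct e_r-\vct e_\ell$. Read in $H^*$, where $r$ is now on the left, this is exactly the generator for the same edge. The map is a linear lattice isomorphism of $\Z^{L\sqcup R}$. It carries the affine span of $\mathcal R_H$, namely $\{\sum_L x=1,\ \sum_R x=-1\}$ intersected with the appropriate sublattice, onto the corresponding affine span for $H^*$. Hence it induces a bijection $t\mathcal R_H\cap\Z^{L\sqcup R}\to t\mathcal R_{H^*}\cap\Z^{L\sqcup R}$ for every $t\ge 0$, so the Ehrhart series agree.

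I expect the main obstacle to be checking that the paper's normalization of $\mathcal R_{\widehat G}$ in \eqref{eq:root-polytope} really is symmetric. If, for example, the polytope includes the origin or is taken in a particular projected lattice, I would verify that the negation-and-swap map still preserves the ambient lattice in which Ehrhart counts are taken. If the augmentation is asymmetric, I would look for an explicit relabeling that identifies $(\widehat G)^*$ with $\widehat{G^*}$, for instance by exchanging the roles of the two added vertices. Failing that, I would fall back on a direct bijective argument via Hall-type conditions, though I would try to avoid this route.
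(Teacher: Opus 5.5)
Your proposal is correct and is essentially the paper's proof. The paper's root polytope uses generators $\vct e_i+\vct f_j$, and the augmentation is symmetric under $\ell_0\leftrightarrow r_0$, so $\widehat{G^*}=(\widehat G)^*$; the plain block swap $T(\vct x,\vct y)=(\vct y,\vct x)$ (no negation needed) then gives the integral equivalence of $\mathcal R_{\widehat G}$ and $\mathcal R_{\widehat{G^*}}$, and Theorem~\ref{thm:F_G-h-polynomial} finishes. One small slip: $\widehat G$ has $m+n+2$ vertices, not $m+n+1$; what actually makes the denominators agree is that the exponent $m+n+1=\dim\mathcal R_{\widehat G}+1$ is symmetric in $m$ and $n$.
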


In \cite{chapoton2026}, Athanasiadis and Chapoton study a special class of Minkowski sums of simplices that can be described in terms of \emph{preorders}. They define the $h$-polynomial of a preorder, which is equivalent to our notion of a support-enumerator, and propose several conjectures concerning its properties. We found that Theorem \ref{thm:duality-support-enumerator} 
gives a proof of their Conjecture 5.4 in \cite{chapoton2026} regarding the duality of the $h$-polynomials of preorders.

By viewing $Q_{G^*}(1,\dots, 1)$ as the dual polytope of $Q_G(1,\dots, 1)$, Theorem \ref{thm:duality-support-enumerator} implies that this duality preserves the number of lattice points, thereby providing an analogue of the corresponding result for Minkowski sums of simplices. This, in turn, leads to the following formula for the number of lattice points in Minkowski sums of cross polytopes.

\begin{theorem}[Lattice-point counting formula]\label{thm:general-formula-for-lattice-points}
Let $G$ be a bipartite graph on $m$ left vertices and $n$ right vertices. Then, for every $k_1,\ldots,k_m\in\Z_{\ge0}$,
\begin{equation}
|Q_G(k_1,\ldots,k_m)\cap\Z^n|
=\sum_{\vct{a}\in D(G)}\prod_{i=1}^m c(a_i,k_i),
\label{eq:general-count}
\end{equation}
where $D(G)$ is the set of $G$-draconian sequences in Definition \ref{def: draconion-sequence} and $c(a,k)$ is an integer satisfying, for all $k \in \Z_{\geq 0}$,
\begin{equation*}
c(0,k) = 1 \quad \text{and} \quad c(a,k)=\sum_{r=1}^{\min(a,k)}2^r\binom{k}{r}\binom{a-1}{r-1} \text{ for } a\geq 1.
\end{equation*}
\end{theorem}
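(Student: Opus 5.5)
The plan is to reduce the count for general $k_1,\ldots,k_m$ to the case where all dilation factors equal $1$. In that case the lattice points are counted by a support-enumerator evaluated at $z=2$, and Theorem~\ref{thm:duality-support-enumerator} then lets us swap $G$ and $G^*$.

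\textbf{Step 1 (all dilations equal to $1$).} Let $H$ be a bipartite graph with $p$ left and $q$ right vertices, and let $I_1,\ldots,I_p$ be its neighborhoods. The polytope $Q_H(1,\ldots,1)$ is invariant under every coordinate sign change. Its intersection with the nonnegative orthant is $P_H(1,\ldots,1)$, as noted after \eqref{eq:cross-polytope}. Hence a lattice point $\vct{x}$ lies in $Q_H(1,\ldots,1)$ if and only if $|\vct{x}|$ lies in $P_H(1,\ldots,1)$. Each lattice point $\vct{y}\in P_H(1,\ldots,1)$ therefore has exactly $2^{s(\vct{y})}$ lattice preimages, one for each choice of signs on its support. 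This gives
\[
|Q_H(1,\ldots,1)\cap\Z^q|=\sum_{\vct{y}\in P_H(1,\ldots,1)\cap\Z^q}2^{s(\vct{y})}.
\]

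Next I would identify $P_H(1,\ldots,1)$ as a polymatroid. Its rank function is $f(S)=|\{j\in[p] : I_j\cap S\neq\emptyset\}|$, which is the size of the neighborhood of $S$ in $H^*$. So
\[
P_H(1,\ldots,1)=\Bigl\{\vct{y}\ge 0 : \textstyle\sum_{i\in S}y_i\le f(S)\ \text{for all } S\subset[q]\Bigr\}.
\]
The lattice points of $P_H(1,\ldots,1)$ are therefore exactly the $H^*$-draconian sequences. I expect this identification to be the main point requiring care. It needs two facts, both standard for integral polymatroids:
\begin{itemize}
\item the Minkowski sum of the simplices $\Delta^0_{I_j}$ is the polymatroid of this rank function (Rado/Edmonds);
\item it has no lattice points beyond those cut out by the inequalities.
\end{itemize}
Consequently $|Q_H(1,\ldots,1)\cap\Z^q|=F_{H^*}(2)$, and Theorem~\ref{thm:duality-support-enumerator} gives $F_{H^*}(2)=F_H(2)$.

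\textbf{Step 2 (replicating left vertices).} For general $\vct{k}$, let $G_{\vct{k}}$ be the bipartite graph obtained from $G$ by replacing each $\ell_i$ with $k_i$ copies, each having neighborhood $I_i$; vertices with $k_i=0$ are deleted. Since $k\diamondsuit_I$ equals the Minkowski sum of $k$ copies of the convex set $\diamondsuit_I$, we have $Q_G(\vct{k})=Q_{G_{\vct{k}}}(1,\ldots,1)$. By Step 1,
\[
|Q_G(\vct{k})\cap\Z^n|=F_{G_{\vct{k}}}(2)=\sum_{\vct{b}\in D(G_{\vct{k}})}2^{s(\vct{b})}.
\]

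\textbf{Step 3 (grouping by blocks).} Given $\vct{b}\in D(G_{\vct{k}})$, let $a_i$ be the sum of the entries of $\vct{b}$ over the copies of $\ell_i$, with $a_i=0$ when $k_i=0$. I claim that $\vct{b}$ is $G_{\vct{k}}$-draconian if and only if $\vct{a}$ is $G$-draconian. The right-hand side of the draconian inequality for a set of copies depends only on which original vertices are represented. Hence the binding constraints are the ones that include all copies of each represented vertex, and these are exactly the $G$-draconian inequalities for $\vct{a}$.

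It then remains to count, for each block, the vectors $(b_1,\ldots,b_{k_i})\in\Z_{\ge0}^{k_i}$ with sum $a_i$, weighted by $2^{\#\text{positive entries}}$. Choosing $r$ positive positions contributes $\binom{k_i}{r}$, and the compositions of $a_i$ into $r$ positive parts contribute $\binom{a_i-1}{r-1}$. So the weighted count is $\sum_{r}2^r\binom{k_i}{r}\binom{a_i-1}{r-1}=c(a_i,k_i)$ for $a_i\ge1$, and it is $1$ for $a_i=0$.

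When $k_i=0$ we have $c(a,0)=0$ for $a\ge1$, so these vertices are automatically forced to have $a_i=0$. The weight $2^{s(\vct{b})}$ factors over the blocks, so summing over all $\vct{a}\in D(G)$ yields \eqref{eq:general-count}.
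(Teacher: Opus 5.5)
Your proposal is correct and follows essentially the same route as the paper. The steps match one for one: the clone-graph identity $Q_G(\vct{k})=Q_{G^{(\vct{k})}}(1,\ldots,1)$; the sign-fiber count giving $F_{H^*}(2)$ (Lemma~\ref{lem:Q_G-F_G}) followed by the duality $F_{H^*}(2)=F_H(2)$; and the grouping of clone-draconian sequences by block sums (Lemma~\ref{lem:group-sums-draconion-sequences}) with the composition count for $c(a,k)$. The only differences are cosmetic: you re-derive the polymatroid description of $P_H(1,\ldots,1)$ instead of citing Postnikov, and your second bullet there holds automatically once the inequality description is known.
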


We note that the function $c(a,k)$ in Theorem \ref{thm:general-formula-for-lattice-points} is a polynomial in $k$ for a given nonnegative integer $a$. By establishing results regarding its coefficients, we are able to easily deduce the Ehrhart positivity for Minkowski sums of polytopes. 

\begin{theorem}[Ehrhart positivity]\label{thm:positivity}
Let $G$ be a bipartite graph on $m$ left vertices. Then, the polytope $Q_G(k_1, \dots, k_m)$ is Ehrhart positive for all nonnegative integers $k_1, \dots, k_m$, i.e., every coefficient of its Ehrhart polynomial is strictly positive.
\end{theorem}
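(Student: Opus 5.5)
The plan is to read Ehrhart positivity off the formula in Theorem \ref{thm:general-formula-for-lattice-points}. Fix $k_1,\dots,k_m\in\Z_{\ge0}$ and let $t\in\Z_{\ge0}$ be the dilation parameter. Since $tQ_G(k_1,\dots,k_m)=Q_G(tk_1,\dots,tk_m)$, Theorem \ref{thm:general-formula-for-lattice-points} gives
\[
\bigl|tQ_G(k_1,\ldots,k_m)\cap\Z^n\bigr|=\sum_{\vct a\in D(G)}\prod_{i=1}^m c(a_i,tk_i).
\]
So it suffices to show that each $c(a,k)$ is a polynomial in $k$ with nonnegative coefficients and positive constant term, and that enough of them contribute to every power of $t$.

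\textbf{Step 1: coefficients of $c(a,k)$.} I would look for a manifestly positive formula for $c(a,k)$. Note that $c(a,k)$ counts lattice points of $\R^a$ with $\ell^1$-norm $\le k$ whose coordinates are all nonzero: choose $r$ of the $k$ "units", compose $a$ into $r$ positive parts, and choose signs. Equivalently, $\sum_{a}c(a,k)x^a=\bigl(\tfrac{1+x}{1-x}\bigr)^k$ up to the $a=0$ convention. Polynomiality in $k$ is clear, since $\binom{k}{r}$ is a polynomial of degree $r$. The degree of $c(a,k)$ is $a$, with leading coefficient $2^a/a!>0$, and $c(a,0)=0$ for $a\ge1$. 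For positivity of all coefficients, I would try an interpolation between simplices and cross polytopes: write $c(a,k)$ via the cross-polytope Ehrhart polynomial $\#\{x\in\Z^a:\|x\|_1\le k\}=\sum_j\binom{a}{j}\binom{k}{j}2^j$. That polynomial is classically known to have nonnegative coefficients, and its roots lie on $\mathrm{Re}=-1/2$. I would then use inclusion–exclusion over the zero coordinates. A cleaner route is the identity $c(a,k)=\sum_{r}2^r\binom{k}{r}\binom{a-1}{r-1}$, converted to the form $\frac{2k}{a}\cdot E_{a-1}(k)$-type expressions. For instance, $c(a,k)=\tfrac{k}{a}\,[\text{coefficient}]$ suggests $c(a,k)=\frac{2k}{a}\sum_j \binom{a-1}{j}\binom{k+?}{j}\cdots$. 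Concretely, I expect $c(a,k)=k\cdot p_a(k)$, where $p_a$ is related to the cross-polytope Ehrhart polynomials $E_{a-1}(k)$ and $E_{a-2}(k)$, e.g. $c(a,k)=\frac{k}{a}\bigl(E_{a-1}(k)+E_{a-2}(k)\bigr)\cdot$const. Positivity of the coefficients of $E_j$ then gives that $c(a,k)$ has nonnegative coefficients, in fact strictly positive ones in degrees $1,\dots,a$. Proving this identity and the coefficient positivity is the main obstacle. First I would verify small cases ($c(1,k)=2k$, $c(2,k)=2k^2+2k$, $c(3,k)=\tfrac{4}{3}k^3+4k^2+\tfrac{8}{3}k$), then prove the identity by generating functions in $x$, using $\bigl(\tfrac{1+x}{1-x}\bigr)^k$ and differentiating in $x$.

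\textbf{Step 2: assembling.} Given Step 1, each product $\prod_i c(a_i,tk_i)$ is a polynomial in $t$ with nonnegative coefficients, so the Ehrhart polynomial has nonnegative coefficients. For strict positivity of the coefficient of $t^d$, $0\le d\le\dim Q$, I need a single $\vct a\in D(G)$ with $\sum_{i:k_i>0}a_i=d$ and $a_i=0$ whenever $k_i=0$. Such a product has a positive $t^d$ coefficient, because every $c(a_i,\cdot)$ has positive coefficients in degrees $1$ through $a_i$, and we can pick degree $a_i$ in each factor. Draconian sequences are closed downward, so it is enough to exhibit one with total $\dim Q$ supported on $\{i:k_i>0\}$ and then decrease entries. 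Such a sequence comes from a maximum matching in the subgraph on left vertices with $k_i>0$: by Hall's theorem, the draconian sequences of maximal total are exactly those with total equal to the rank $|\bigcup_{k_i>0}I_i|$. Here $\dim Q=|\bigcup_{k_i>0}I_i|$, since each cross polytope is full-dimensional in its coordinate span and a Minkowski sum spans the sum of those spans. The maximal draconian sum also equals $|\bigcup I_i|$, taking $J$ to be the whole set and using a polymatroid-greedy argument. Finally, the constant term $1$ comes from $\vct a=\vct 0$.
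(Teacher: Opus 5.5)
Your overall structure is the paper's: expand $i(Q,t)=\sum_{\vct a\in D(G)}\prod_i c(a_i,tk_i)$, show every $c(a,k)$ has nonnegative coefficients in $k$, and witness each power $t^r$ by one draconian $\vct a$ with $|\vct a|=r$ supported on $\{i:k_i>0\}$, using only the leading coefficient $2^{a}/a!$ of each factor. The gap is Step 1. The whole argument depends on it, you leave it unproved, and the route you sketch rests on incorrect values.

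From $c(a,k)=\sum_r 2^r\binom{k}{r}\binom{a-1}{r-1}$, or directly from $(1+2x+2x^2+\cdots)^k$, one gets $c(2,k)=2k+4\binom{k}{2}=2k^2$ and $c(3,k)=\frac43k^3+\frac23k$. Your values are $2^a\binom{k+a-1}{a}$: you weighted each weak composition $\vct b$ of $a$ by $2^{|\vct b|}$ instead of $2^{s(\vct b)}$. Consequently the guessed identity $c(a,k)=\mathrm{const}\cdot\frac{k}{a}\bigl(E_{a-1}(k)+E_{a-2}(k)\bigr)$ already fails at $a=2$, where the right side is a multiple of $k^2+k$. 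The true relation to the cross-polytope Ehrhart polynomials is a difference, $c(a,k)=E_a(k)-E_{a-1}(k)$, so positivity of the coefficients of the $E_j$ does not transfer. Your claim that $c(a,k)$ has strictly positive coefficients in every degree $1,\dots,a$ is also false: only powers $k^r$ with $r\equiv a\pmod 2$ occur, e.g.\ $c(2,k)=2k^2$. Step 2 survives this only because you actually use just the top degree of each factor.

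The missing idea is a way to read positivity in $k$ off the generating function $\sum_a c(a,k)u^a=\bigl(\frac{1+u}{1-u}\bigr)^k$. The paper writes it as $\exp\bigl(kB(u)\bigr)$ with $B(u)=\log\frac{1+u}{1-u}=2\sum_{q\ge0}\frac{u^{2q+1}}{2q+1}$, so $c(a,k)=\sum_{r=0}^{a}\frac{k^r}{r!}[u^a]B(u)^r$. Since $B$ has nonnegative coefficients and is odd, this gives nonnegativity, the leading coefficient $2^a/a!$, and the parity property at once (Lemma~\ref{lem:properties-of-c(a,x)}).

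Your differentiation idea also works if carried out correctly. Applying $\frac{d}{du}$ gives $\sum_a a\,c(a,k)u^{a-1}=\frac{2k}{1-u^2}\bigl(\frac{1+u}{1-u}\bigr)^k$, i.e.\ $a\,c(a,k)=2k\sum_{j\ge0}c(a-1-2j,k)$ as polynomials in $k$, and induction on $a$ then gives nonnegative coefficients.

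A smaller point in Step 2: a maximum matching on $\{\ell_i:k_i>0\}$ yields a $0/1$ vector whose total is the matching number. That is generally smaller than $|\bigcup_{k_i>0}I_i|$, for instance when one left vertex has three neighbors. What you need, and what the paper does, is the following. Choose any $W\subset\bigcup_{k_i>0}I_i$ with $|W|=r$, send each $j\in W$ to a neighbor $\ell_{i(j)}$ with $k_{i(j)}>0$, and set $a_i=|\{j\in W: i(j)=i\}|$. This is draconian directly, with no need for Hall's theorem, polymatroids, or downward closure.
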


We note that since
$Q_G(k_1,\ldots,k_m)$ belongs to the family of type $B$ generalized
permutohedra, existing formulas for lattice-point enumeration in \cite{Euretall2024} and \cite{thawinrak2025AtoB} can be applied to compute its number of lattice points. However, these formulas do not make the Ehrhart positivity particularly
transparent. Thus, our formula is better suited for this family of polytopes in this sense. See Remark \ref{rem:other-formulas} for a more detailed discussion on this positivity.

The properties of $c(a,k)$ allow us to go beyond deducing the Ehrhart positivity. By showing that the parity of $c(a,k)$ is the same as the parity of $a$, we further derive formulas in similar forms to \eqref{eq:general-count} for the number of lattice points on the boundary and for the surface volume.

\subsection*{Paper Organization}

In Section \ref{sec:prelim}, we introduce basic notations regarding polytopes and review preliminary results from Ehrhart theory, along with existing techniques for counting lattice points in Minkowski sums of simplices. In Section \ref{sec: sums-of-cross-polytopes}, we establish fundamental properties of Minkowski sums of cross polytopes that can be deduced directly from the study of Minkowski sums of simplices. In Section \ref{sec:lttice-point-enumeration}, we investigate the lattice-point enumeration of Minkowksi sums of cross polytopes, and derive related results. Finally, in Section \ref{sec:further-questions}, we present several potential research directions concerning other aspects of Minkowski sums of cross polytopes that remain unexplored.    

\section{Preliminaries}\label{sec:prelim}
\subsection{Polytopes}

A \emph{polytope} $P$ in $\R^n$ can be defined as the convex hull of a finite set of points in $\R^n$, i.e.,
\[
P=\conv(\vct{x}_1,\ldots,\vct{x}_k)
 :=\left\{\lambda_1\vct{x}_1+\cdots+\lambda_k\vct{x}_k
 \mathrel{\Big|}
 \lambda_1+\cdots+\lambda_k=1,\ \lambda_i\ge 0\text{ for all }i\in[k]\right\}.
\]
By the Minkowski-Weyl Theorem \cite{Minkowski1968, Weyl1934}, one can equivalently define a polytope using a system of inequalities. The dimension of $P$, denoted by $\dim(P)$, is the dimension of the affine span $\aff(P)$ of $P$. We denote by $\Vol(P)$ the volume of $P$ with respect to the lattice in $\aff(P)$. The normalized volume $\NVol(P)$ of $P$ is then defined by $\NVol(P) := \dim(P)!\Vol(P).$

A subset $F$ of a polytope $P$ is said to be a \emph{face} of $P$ if there exists a hyperplane $H$ such that $P$ lies on one side of $H$ and $F=P\cap H$. A face of dimension $0$ is called a \emph{vertex}, a face of dimension $1$ is called an \emph{edge}, and a face of dimension $\dim(P)-1$ is called a \emph{facet}. The \emph{surface volume}, denoted by $\SVol(P)$, of $P$ is the sum of the volume $\Vol(F)$ over all facets $F$ of $P$. We say that $P$ is a \emph{lattice} (or \emph{integral}) \emph{polytope} if all of its vertices have integral coordinates.

Given two nonempty polytopes $P_1,P_2\subset\mathbb{R}^n$, their \emph{Minkowski sum}, denoted by $P_1+P_2$, is defined as
\[
P_1+P_2
:=
\{\vct{x}_1+\vct{x}_2\in\mathbb{R}^n\mid \vct{x}_1\in P_1,\ \vct{x}_2\in P_2\}.
\]
If $U$ and $V$ denote the sets of vertices of $P_1$ and $P_2$, respectively, then
\[
P_1+P_2
:=
\conv\bigl(\vct{u}+\vct{v}\mid \vct{u}\in U,\ \vct{v}\in V\bigr).
\]
In particular, the Minkowski sum of two polytopes is again a polytope.

The \emph{Minkowski difference} of $P_2$ in $P_1$, denoted by $P_1-P_2$, is defined as
\[
P_1-P_2
=
\{\vct{x}\in\mathbb{R}^n\mid \vct{x}+P_2\subset P_1\}.
\]
Equivalently, $P_1-P_2$ is the set of translation vectors that place $P_2$ inside $P_1$, and is again a polytope. It is important to note that Minkowski difference is neither commutative nor associative in general. For instance, we have $(P_1 + P_2) - P_2 = P_1$, but $(P_1 - P_2) + P_2$ may not equal $P_1$, or may even be undefined if $P_1 - P_2$ is empty. Thus, when working with both Minkowski sums and differences, it is important to specify the order of the operations. Given nonempty polytopes $P_1,\ldots,P_m\subset\mathbb{R}^n$ and signs $\delta_i\in\{1,-1\}$ for $i\in[m],$ we define
\[\sum^{m}_{i = 1}\delta_i P_i := Q_1 - Q_2 \text{ where } Q_1 := \sum_{\delta_i = 1} P_i \text{ and } Q_2 := \sum_{\delta_i = -1} P_i.\]
In this paper, we primarily focus on Minkowski sums of polytopes, and only refer to Minkowski differences on a few occasions.
\subsection{Ehrhart Theory}\label{sec:Ehrhart-theory}
Given a nonnegative real number $t$, the $t^{\mathrm{th}}$-dilation of $P$, denoted by $tP$, is the set $\{t\vct{x}\mid \vct{x}\in P\}$. When $t$ is a positive integer, $tP$ equals the Minkowski sum of $t$ copies of $P$. We define 
\[
i(P,t):=|tP\cap\Z^n|
\]
to be the number of lattice points in $tP$ for all $t \in \Z_{\geq 0}$. A fundamental result of Ehrhart theory \cite{Ehrhart1962} establishes that, for every lattice polytope $P$, the function $i(P,t)$ is a polynomial in $t\in\Z_{\geq 0}$ of degree equal to $\dim(P)$, the dimension of $P$. We call $i(P,t)$ the \emph{Ehrhart polynomial} of $P$. It encodes several geometric and combinatorial properties of $P$. The following are well-known properties of Ehrhart polynomials.
\begin{enumerate}
    \item\label{itm: constant-term} The constant term of $i(P,t)$ is 1.
    \item\label{itm: leading-coefficient} The leading coefficient of $i(P,t)$ equals $\Vol(P)$, the volume of $P$.
    \item\label{itm: second-leading-coefficient} The second-leading coefficient (the coefficient of $t^{\dim(P)-1}$) of $i(P,t)$ equals $\SVol(P)/2$.
    \item\label{itm: recipocity} The number of lattice points in the (relative) interior of $tP$ equals $(-1)^{\dim(P)}i(P,-t).$
\end{enumerate}

We note that property \ref{itm: recipocity} is commonly referred to as the \emph{reciprocity} of Ehrhart polynomials. Properties \ref{itm: constant-term}--\ref{itm: second-leading-coefficient} might lead us to believe that all the coefficients of Ehrhart polynomials are nonnegative. However, this is not the case for some polytopes of dimension greater than 2. We refer the reader to the survey \cite{Liu2019} by Liu regarding the positivity of the coefficients of Ehrhart polynomials. When every coefficient of $i(P,t)$ is positive, we say that $P$ is \emph{Ehrhart positive}. As discussed in the survey by Liu, Ehrhart positivity has been established for several important classes of lattice polytopes. Determining whether a given family is Ehrhart positive is an intriguing problem in Ehrhart theory.

For every lattice polytope $P$ of dimension $d$, one can always express
\begin{align}\label{eq:h-polynomial-Ehrhart-Polynomial}
    i(P,t) = h^*_0\binom{t+d}{d} + h^*_1\binom{t+d-1}{d} + \cdots + h^*_d\binom{t}{d}
\end{align}
for some real numbers $h^*_0, h^*_1, \dots, h^*_d.$ This is due to the fact that the binomials $\binom{t+d}{d}, \binom{t+d-1}{d}, \dots, $ $\binom{t}{d}$ form a basis of the space 
\[\R[t]_{\leq d} :=\{a_0 + a_1t+\cdots + a_dt^d\mid a_0 \in \R \text{ and } a_i \in \R \text{ for all } i \in [d]\}\]
of polynomials of degree at most $d$ over $\R$ (in variable $t$). Since $i(P,t)$ is an integer for all $t \in \Z_{\geq 0}$, we further have that the coefficients $h^*_0, h^*_1, \dots, h^*_d$ are all integers. In fact, a fundamental theorem by Stanley \cite{Stanley1980} asserts that these coefficients are nonnegative integers. We call the polynomial
\[h^*_P(z):= h^*_0 + h^*_1z + \cdots + h_d^*z^d\]
the \emph{$h^*$-polynomial} of $P$. By definition, $h^*_P(z)$ is a polynomial of degree at most $d$ with nonnegative integral coefficients. Since the leading coefficient of $i(P,t)$ equals the volume of $P$, one sees from \eqref{eq:h-polynomial-Ehrhart-Polynomial} that 
\[h^*_P(1) = h^*_0 + h^*_1 + \cdots + h^*_d = d!\Vol(P) = \NVol(P).\]
That is, for every nonempty lattice polytope $P$, the sum of the coefficients of $h^*_P(z)$ is equal to the normalized volume of $P$ and is a positive integer.

The \emph{Ehrhart series} of a lattice polytope $P$ is defined as the generating function
\[\Ehr_P(z):= \sum_{t \geq 0}|tP\cap \Z^n|z^t = \sum_{t \geq 0}i(P,t)z^t.\]
By \eqref{eq:h-polynomial-Ehrhart-Polynomial}, we equivalently have
\[\Ehr_P(z):= \frac{h^*_P(z)}{(1-z)^{\dim(P)+1}}.\]

Two lattice polytopes $P \subset \R^n$ and $Q \subset \R^m$ are said to be \emph{integrally equivalent} if there exists an invertible affine transformation from $\mathrm{aff}(P)$ to $\mathrm{aff}(Q)$ that preserves the lattice points in $P$ and $Q$. Two lattice polytopes that are integrally equivalent have identical Ehrhart polynomials and, consequently, the same $h^*$-polynomial and volume.

\subsection{Minkowski Sums of Simplices}\label{sec: sums-of-simplices}

Recall the notations and definitions concerning bipartite graphs and the polytopes $P_G(y_1,\dots, y_m)$ introduced in Section \ref{sec:intro}. We now provide examples, discuss the basic properties, and review existing techniques for counting lattice points in these polytopes that motivate this work.

\begin{figure}
    \centering
\includegraphics[width=1\linewidth]{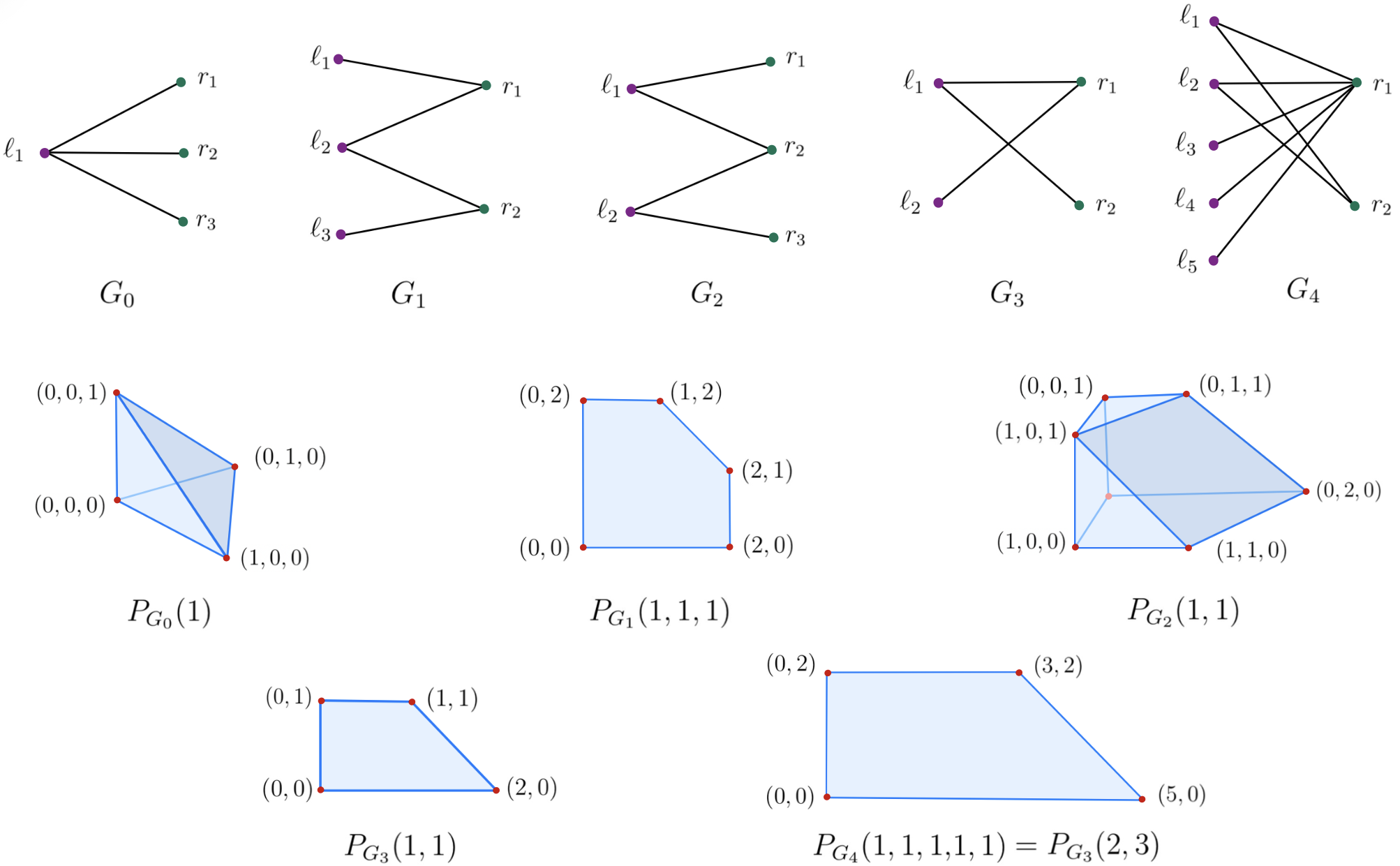}
    \caption{Minkowski sums of simplices and their corresponding bipartite graphs}
    \label{fig:graphs-simplices}
\end{figure}

\begin{example}\label{ex:P_G}
 Let $G_0, G_1, \dots, G_4$ be the bipartite graphs shown in Figure \ref{fig:graphs-simplices}. Then, the following polytopes are also shown in Figure \ref{fig:graphs-simplices}.
 \begin{align*}
     P_{G_0}(1) &= \Delta^0_{[3]} \subset \R^3,\\
     P_{G_1}(1,1,1) &= \Delta^0_{\{1\}} +\Delta^0_{[2]} + \Delta^0_{\{2\}} \subset \R^2,\\
     P_{G_2}(1,1) &= \Delta^0_{[2]} + \Delta^0_{\{2,3\}} \subset \R^3,\\
     P_{G_3}(1,1) &= \Delta^0_{[2]} + \Delta^0_{\{1\}}\subset \R^2,\\
     P_{G_4}(1,1,1,1,1) &= 2\Delta^0_{[2]} + 3\Delta^0_{\{1\}} = P_{G_3}(2,3) \subset \R^2.
 \end{align*}
 \end{example}
 
The polytope $P_G(y_1,\ldots,y_m)$ is known to be a projection of a type A generalized permutohedron and can be viewed simply as such a permutohedron (See \cite[Lemma 3.7]{thawinrak2025AtoB}). This allows us to apply tools and techniques from the study of type A generalized permutohedra by Postnikov in \cite{Postnikov2009} to $P_G(y_1, \dots, y_m)$. 

\begin{lemma}[\cite{Postnikov2009}]\label{lem:inequalities-P_G}
The polytope $P_G(y_1,\ldots,y_m)$ is given by the set of all $\vct{x}\in\R^n$ such that $0\le x_i$ for all $i\in[n]$, and for all non-empty subsets $J\subset[n]$,
\[
\sum_{j\in J}x_j\le \sum_{I_r\cap J\ne\varnothing}y_r.
\]
\end{lemma}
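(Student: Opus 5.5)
We have to prove Lemma \ref{lem:inequalities-P_G}: the H-description of $P_G(y_1,\ldots,y_m)=\sum_r y_r\Delta^0_{I_r}$, assuming $y_r\ge 0$ as the context intends. The plan is to show the two inclusions separately, with the reverse inclusion being the substantive one.

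\textbf{The forward inclusion.} Every point of the sum has the form $\vct{x}=\sum_r y_r\vct{p}_r$ with $\vct{p}_r\in\Delta^0_{I_r}$. Each $\vct{p}_r$ has nonnegative coordinates, is supported on $I_r$, and has coordinate sum at most $1$.
\begin{itemize}
\item Nonnegativity of $\vct{x}$ is then immediate.
\item For $J\subset[n]$, the coordinates of $\vct{p}_r$ in $J$ vanish unless $I_r\cap J\neq\varnothing$, and otherwise sum to at most $1$.
\item Summing over $r$ gives $\sum_{j\in J}x_j\le\sum_{I_r\cap J\neq\varnothing}y_r$.
\end{itemize}

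\textbf{The reverse inclusion as a flow problem.} Let $\vct{x}\ge 0$ satisfy all the inequalities. We must write $x_i=\sum_r f_{r,i}$ with $f_{r,i}\ge0$, $f_{r,i}=0$ unless $i\in I_r$, and $\sum_i f_{r,i}\le y_r$. Then $\vct{p}_r:=y_r^{-1}(f_{r,i})_i$ lies in $\Delta^0_{I_r}$ (take $\vct{p}_r$ arbitrary when $y_r=0$, in which case all $f_{r,i}=0$), and $\vct{x}=\sum_r y_r\vct{p}_r$.

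This is a transportation problem on the bipartite graph $G$:
\begin{itemize}
\item each left vertex $\ell_r$ supplies at most $y_r$;
\item each right vertex $r_i$ demands exactly $x_i$;
\item edges carry unlimited capacity.
\end{itemize}

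\textbf{Existence of the flow.} This is the fractional Hall / Gale supply–demand theorem: a feasible flow exists iff for every $J\subset[n]$ the demand of $J$ is at most the total supply of the neighbourhood of $J$, namely $\sum_{I_r\cap J\neq\varnothing}y_r$. This is exactly the hypothesis. To be self-contained, I would derive it from max-flow/min-cut on the network
\[
s\to\ell_r \ (\text{capacity } y_r),\qquad \ell_r\to r_i \ (\text{capacity }\infty),\qquad r_i\to t \ (\text{capacity } x_i).
\]

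\textbf{Main obstacle: the min-cut computation.} The key step is checking that every finite cut has capacity at least $\sum_i x_i$. A finite cut cannot cut any infinite edge. So if $J$ is the set of right vertices on the source side, every left neighbour of $J$ must also lie on the source side, and its source edge is uncut. The cut capacity is therefore at least
\[
\sum_{i\notin J}x_i+\sum_{I_r\cap J\neq\varnothing}y_r\;\ge\;\sum_{i\notin J}x_i+\sum_{i\in J}x_i=\sum_i x_i,
\]
using the hypothesis (trivially when $J=\varnothing$). Hence the maximum flow saturates all demands, which completes the proof.
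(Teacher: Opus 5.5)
Your overall strategy works, and the displayed lower bound on the cut capacity is the right one. However, the sentence justifying it gets the orientation of your directed network backwards, and this is exactly the step you flagged as the main obstacle. An infinite edge $\ell_r\to r_i$ is cut only when $\ell_r$ is on the source side and $r_i$ is on the sink side. So a right vertex on the source side imposes no condition on its left neighbours: if such a neighbour $\ell_r$ is on the sink side, the edge $\ell_r\to r_i$ runs from the sink side to the source side and does not count.

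Your sentence also contradicts your own display. If the left neighbours of $J$ sat on the source side with uncut source edges, the term $\sum_{I_r\cap J\neq\varnothing}y_r$ would not appear in the capacity at all. And with $J$ on the source side, the cut edges into $t$ contribute $\sum_{i\in J}x_i$, not $\sum_{i\notin J}x_i$.

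The repair is to let $J$ be the set of right vertices on the \emph{sink} side. A finite cut then forces every $\ell_r$ with $I_r\cap J\neq\varnothing$ onto the sink side, so each such edge $s\to\ell_r$ \emph{is} cut. Every edge $r_i\to t$ with $i\notin J$ is also cut. This gives exactly your bound $\sum_{i\notin J}x_i+\sum_{I_r\cap J\neq\varnothing}y_r\ge\sum_i x_i$, and the rest of your argument (forward inclusion, reduction to a transportation problem, recovering $\vct{p}_r$ from the flow) is correct as written.

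As for the route: the paper does not prove this lemma. It cites Postnikov, having observed that $P_G(y_1,\ldots,y_m)$ can be viewed as (a projection of) a type A generalized permutohedron. In that setting the inequality description comes from the generalized-permutohedron description of each simplex, with the right-hand sides adding under Minkowski sums.

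Your max-flow argument is a self-contained, elementary substitute; it is the fractional Hall/Gale supply--demand theorem. It also produces an explicit decomposition $\vct{x}=\sum_r y_r\vct{p}_r$. By the integrality of maximum flows it even shows that, for integral $y_r$, every lattice point of $P_G(y_1,\ldots,y_m)$ is a sum of lattice points of the summands $y_r\Delta^0_{I_r}$. The cited route, by contrast, comes for free once one has Postnikov's theory, which the paper needs anyway for the draconian duality and the lattice-point formula.
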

The draconian sequences introduced in Definition \ref{def: draconion-sequence} are combinatorial objects that play a crucial role in counting lattice points throughout this paper. The following four lemmas demonstrate their significance and show how they arise in this counting process.
\begin{remark}
    The notion of draconian sequence originally defined by Postnikov in \cite{Postnikov2009} slightly differs from our notion in Definition \ref{def: draconion-sequence}. In the original notion, a $G$-draconian sequence would have been defined as a sequence of nonnegative integers $(a_0, a_1, \dots, a_m)$ such that $a_0 + a_1 + \cdots + a_m = n$ and $a_1, \dots, a_m$ are as described in Definition \ref{def: draconion-sequence}. This slight abuse of notation will cause no ambiguity, as the original notion does not appear elsewhere in the paper.
\end{remark}
     
For a given bipartite graph $G$, we denote by $G^*$ the dual bipartite graph of $G$ obtained by swapping the left and right sets of vertices.

\begin{lemma}[\cite{Postnikov2009}]\label{lem: draconion-sequences-dual}
The $G$-draconian sequences are the lattice points in $P_{G^*}(1,\ldots,1),$ i.e., 
\[D(G) = P_{G^*}(1,\ldots,1)\cap \Z^m.\]    
\end{lemma}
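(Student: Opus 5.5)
The plan is to apply Lemma \ref{lem:inequalities-P_G} to the dual graph $G^*$ and check that the resulting inequality description is exactly Definition \ref{def: draconion-sequence}. In $G^*$ the left vertices are $r_1,\dots,r_n$ and the right vertices are $\ell_1,\dots,\ell_m$. The neighbourhood of $r_i$ in $G^*$ is therefore
\[
I_i^*:=\{j\in[m]\mid \ell_j \text{ is adjacent to } r_i\},
\]
so that $P_{G^*}(1,\ldots,1)=\sum_{i=1}^n\Delta^0_{I_i^*}\subset\R^m$. By Lemma \ref{lem:inequalities-P_G}, this polytope is the set of $\vct{x}\in\R^m$ satisfying two conditions:
\begin{itemize}
\item $x_j\ge 0$ for all $j\in[m]$;
\item $\sum_{j\in J}x_j\le \sum_{I^*_r\cap J\neq\varnothing}1=\bigl|\{r\in[n]\mid I^*_r\cap J\neq\varnothing\}\bigr|$ for every nonempty $J\subset[m]$.
\end{itemize}

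The main step is to rewrite the right-hand side. We have $I_r^*\cap J\neq\varnothing$ exactly when some $j\in J$ satisfies $\ell_j\sim r_r$, which holds exactly when $r\in I_j$ for some $j\in J$. Hence
\[
\{r\in[n]\mid I^*_r\cap J\neq\varnothing\}=\bigcup_{j\in J}I_j .
\]
So $P_{G^*}(1,\ldots,1)$ is cut out by $x_j\ge0$ together with $\sum_{j\in J}x_j\le|\bigcup_{j\in J}I_j|$ for all nonempty $J\subset[m]$.

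Intersecting with $\Z^m$ then gives exactly the nonnegative integer sequences satisfying the inequalities of Definition \ref{def: draconion-sequence}, which is $D(G)$.

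The only real obstacle is the bookkeeping when the roles of $L$ and $R$ are swapped. If some $I_i^*$ is empty, for example an isolated right vertex, the convention $\Delta^0_\varnothing=\{\vct{0}\}$ makes that summand harmless. Such an $r$ also never satisfies $I^*_r\cap J\neq\varnothing$, so Lemma \ref{lem:inequalities-P_G} still applies verbatim.
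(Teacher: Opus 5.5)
Your proof is correct and complete: applying Lemma~\ref{lem:inequalities-P_G} to $G^*$ with all $y_r=1$ and identifying $\{r\in[n]\mid I^*_r\cap J\neq\varnothing\}$ with $\bigcup_{j\in J}I_j$ turns the inequality description of $P_{G^*}(1,\ldots,1)$ into exactly the draconian inequalities, and your remark on empty neighbourhoods covers the only edge case. The paper does not prove this lemma itself but cites Postnikov, so there is no separate route to compare; your derivation is the standard one and uses the same inequality-description reasoning the paper applies elsewhere, for instance when it identifies $\mathcal{Q}_\tau$ with $P_{G_\tau}(1,\dots,1)$.
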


\begin{example}\label{ex:draconian-sequences}
    By Figure \ref{fig:graphs-simplices}, we see that
    \begin{align*}
    P_{G^*_0}(1,1,1) &= 3\Delta^0_{[1]} = [0,3] \subset \R,
    &\qquad P_{G_1^*}(1,1) &= P_{G_2}(1,1) \subset \R^3,\\
    P_{G_2^*}(1,1,1) &= P_{G_1}(1,1,1) \subset \R^2,
    &\qquad P_{G_3^*}(1,1) &= P_{G_3}(1,1) \subset \R^2.
\end{align*}
Thus, we have
    \begin{align*}
        D(G_0) &= P_{G^*_0}(1,1,1)\cap \Z = [0,3]\cap \Z= \{0,1,2,3\},\\
        D(G_1) &= P_{G_2}(1,1)\cap \Z^3 \\
        &= \{(0,0,0), (1,0,0), (0,1,0),(0,0,1),(1,1,0),(1,0,1),(0,1,1),(0,2,0)\},\\
        D(G_2) &= P_{G_1}(1,1,1)\cap \Z^2 = \{(0,0), (1,0),(0,1),(2,0),(1,1),(0,2),(2,1),(1,2)\},\\
        D(G_3) &= P_{G_3}(1,1)\cap \Z^2 = \{(0,0), (1,0),(0,1),(2,0),(1,1)\}.
    \end{align*}
\end{example}

One may view $P_{G^*}(1,\ldots,1)$ as the "dual" polytope of $P_G(1,\ldots,1)$ and vice versa. Thus, Lemma \ref{lem: draconion-sequences-dual} states that the lattice points in the dual of $P_{G}(1,\ldots,1)$ are precisely the $G$-draconian sequences, and that $P_G(1,\dots, 1)\cap \Z^n = D(G^*)$.  The next lemma shows that this duality preserves the number of lattice points.

\begin{lemma}[\cite{Postnikov2009}]\label{lem: lattice-points-draconion-sequences}
The number of lattice points in $P_G(1,\ldots,1)$ equals the number of $G$-draconian sequences. That is,
\[
|P_G(1,\ldots,1)\cap\Z^n|=|P_{G^*}(1,\ldots,1)\cap\Z^m|.
\]
\end{lemma}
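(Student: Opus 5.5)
The plan is to prove $|P_G(1,\ldots,1)\cap\Z^n|=|D(G)|$ by computing the number of lattice points of the generalized permutohedron $P_G(1,\ldots,1)$ through a fine mixed subdivision, and then recognizing the resulting count as the number of draconian sequences. By \cite[Lemma 3.7]{thawinrak2025AtoB}, $P_G(y_1,\ldots,y_m)$ can be treated as a type A generalized permutohedron $\sum_j y_j\Delta_{I_j\cup\{0\}}$ in $\R^{n+1}$ by adding a coordinate $x_0$. Postnikov's theory then applies, and the number of lattice points of $\sum_j \Delta_{I_j\cup\{0\}}$ can be read off from a fine mixed cell decomposition.

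\textbf{Step 1: fine mixed subdivision.} I will use the Cayley trick. Fine mixed subdivisions of $\sum_{j}\Delta_{\widehat I_j}$, with $\widehat I_j=I_j\cup\{0\}$, correspond to triangulations of the root polytope of the bipartite graph $\widehat G$, obtained from $G$ by joining a new right vertex $0$ to all left vertices. Each fine mixed cell has the form $\sum_j \Delta_{T_j}$, where the sets $T_j\subset \widehat I_j$ are the edge sets of a spanning tree $T$ of $\widehat G$. Each lattice point of the Minkowski sum should then be counted exactly once by assigning it to a cell together with a half-open convention.

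\textbf{Step 2: count per cell and match with $D(G)$.} Postnikov's key result (Theorem 11.3 of \cite{Postnikov2009}) states that the lattice points of $\sum_j y_j\Delta_{\widehat I_j}$ are in bijection with the lattice points of $\sum_j y_j\Delta_{T_j}$ for a single fine mixed cell. It also gives the count $\sum \prod_j \binom{y_j+a_j-1}{a_j}$, where the sum runs over sequences $(a_j)$ with $\sum_j a_j$ equal to the appropriate dimension and satisfying the dragon-marriage (Hall-type) condition. Specializing to $y_j=1$ makes every binomial equal to $1$, so the count becomes the number of sequences $(a_0,a_1,\ldots,a_m)$ with $a_0+\cdots+a_m=n$ that satisfy $\sum_{i\in J}a_i\le|\bigcup_{i\in J} I_i|$ for $J\subset[m]$. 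The extra vertex $0$ absorbs the slack, so $a_0$ is determined by the others. This is exactly Postnikov's original draconian notion, and it matches $D(G)$ as defined here by the preceding remark.

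\textbf{Step 3: the dual form.} The equality with $|P_{G^*}(1,\ldots,1)\cap\Z^m|$ then follows directly from Lemma~\ref{lem: draconion-sequences-dual}. I expect the main obstacle to be the bijection in Step 2 between lattice points of the whole sum and sequences satisfying Hall's condition. A more self-contained route is to build the map via Hall's theorem: each lattice point $\vct{x}\in P_G(1,\ldots,1)$ is a 0/1 incidence decomposition $\vct x=\sum_j \vct e_{\phi(j)}$ with $\phi(j)\in I_j\cup\{\text{none}\}$. Choosing a canonical decomposition through a fixed ordering (a "greedy" matching) should turn the degree sequence of the left side into an injective, and by Hall's condition surjective, assignment. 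If this direct approach becomes too delicate, I will fall back on citing Postnikov's mixed-subdivision count.
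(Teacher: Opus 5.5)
Your proposal has a genuine gap: Steps 1--2 never supply an argument. The result you attribute to Postnikov in Step 2 is false as stated. The lattice points of the whole sum are not in bijection with those of a single fine mixed cell: for $m=2$, $n=1$, $I_1=I_2=\{1\}$, the sum $\Delta_{\{0,1\}}+\Delta_{\{0,1\}}$ has three lattice points, while each fine mixed cell, e.g.\ $\Delta_{\{0,1\}}+\Delta_{\{0\}}$, has two. The half-open convention of Step 1 is never specified, and nothing connects per-cell counts to draconian sequences. What you actually invoke is the count $\sum\prod_j\binom{y_j+a_j-1}{a_j}$ at $y_j=1$, which is the statement itself. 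In this paper Lemma~\ref{lem:lattice-points-formula-P_G} is presented as a consequence of the very lemma you are proving, so the argument is circular. Tellingly, the coordinate $a_0$ in your Step 2 only exists for a sum with an extra summand $\Delta_{\{0,\ldots,n\}}$ (a left vertex $\ell_0$ joined to all right vertices), which your Step 1 never introduces. Your fallback also fails as described. In $\vct x=\sum_j\vct e_{\phi(j)}$ every left vertex has degree $0$ or $1$, so the left degree sequence lies in $\{0,1\}^m$. For $G_0$ the four lattice points of $\Delta^0_{[3]}$ would land in $\{0,1\}$, whereas $D(G_0)=\{0,1,2,3\}$, so the map is neither injective nor onto.

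The paper gives no proof of its own and cites Postnikov. The missing idea in Postnikov's argument is to read off two different degree statistics of the same spanning trees.
\begin{itemize}
\item Take the augmented graph $\widehat G$ of Section~\ref{sec:h-polynomial} and a triangulation of $\mathcal R_{\widehat G}$. By the Cayley trick this is a fine mixed subdivision of $\Delta_{\{0,\ldots,n\}}+\sum_{j=1}^m\Delta_{I_j\cup\{0\}}$, whose cells are $\sum_{i=0}^m\Delta_{T(\ell_i)}$ for spanning trees $T$, with $T(\ell_i)$ the neighbours of $\ell_i$ in $T$.
\item Set $\vct p_T:=(\deg_T(r_j)-1)_{j=0}^n$. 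Rooting $T$ at each right vertex shows $\vct p_T+\Delta_{\{0,\ldots,n\}}\subseteq\sum_{i=0}^m\Delta_{T(\ell_i)}$.
\item The key lemma is that, over the simplices of a triangulation, these right degree vectors are pairwise distinct and exhaust the lattice points of $\sum_{j=1}^m\Delta_{I_j\cup\{0\}}\cong P_G(1,\ldots,1)$.
\item Since $(\widehat G)^*=\widehat{G^*}$, the same lemma shows that the left degree vectors $(\deg_T(\ell_i)-1)_{i=1}^m$ are pairwise distinct and exhaust $D(G)$.
\end{itemize}
Both counts then equal the number of maximal simplices, $\NVol(\mathcal R_{\widehat G})$, and no draconian counting formula is needed in advance.
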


Readers can easily see the result of Lemma \ref{lem: lattice-points-draconion-sequences} demonstrated using Example \ref{ex:draconian-sequences} and Figure \ref{fig:graphs-simplices}. As a consequence, we obtain the following formula for the number of lattice points.

\begin{lemma}[\cite{Postnikov2009}]\label{lem:lattice-points-formula-P_G}
Let $y_1,\ldots,y_m$ be nonnegative integers. Then,
\begin{equation}
|P_G(y_1,\ldots,y_m)\cap\Z^n|
=\sum_{\vct{a}\in D(G)}
\binom{y_1+a_1-1}{a_1}\cdots
\binom{y_m+a_m-1}{a_m}.
\label{eq:simplex-count}
\end{equation}
\end{lemma}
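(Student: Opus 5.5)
The plan is to derive Lemma \ref{lem:lattice-points-formula-P_G} from Lemmas \ref{lem:inequalities-P_G}, \ref{lem: draconion-sequences-dual} and \ref{lem: lattice-points-draconion-sequences}. The approach is a decomposition of each lattice point of $P_G(y_1,\ldots,y_m)$ into contributions from the individual simplices, combined with a polymatroid/transversal exchange argument that sets up a canonical bijection.

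\textbf{Step 1: reduce to unit multiplicities.} Replace $G$ by the graph $G_{\mathbf y}$ in which left vertex $\ell_j$ is duplicated $y_j$ times, each copy keeping the neighborhood $I_j$. Then
\[
P_G(y_1,\ldots,y_m)=P_{G_{\mathbf y}}(1,\ldots,1).
\]
By Lemma \ref{lem: lattice-points-draconion-sequences}, $|P_G(\mathbf y)\cap\Z^n|=|D(G_{\mathbf y})|$.

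\textbf{Step 2: compute $|D(G_{\mathbf y})|$.} A sequence indexed by the copies is draconian exactly when its block-sums form a $G$-draconian sequence. In one direction, take $J$ to be unions of whole blocks. In the other direction, any $J$ over copies has the same union of neighborhoods as the union of the full blocks it meets, and the left-hand sum only increases when we pass to those full blocks. So $D(G_{\mathbf y})$ is the set of all ways to split each $a_j$ of some $\mathbf a\in D(G)$ into an ordered sum of $y_j$ nonnegative integers. There are $\binom{y_j+a_j-1}{a_j}$ such splittings for each $j$, which gives \eqref{eq:simplex-count}.

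\textbf{Edge case $y_j=0$.} The formula gives $\binom{a_j-1}{a_j}=[a_j=0]$. This is consistent with deleting $\ell_j$, since $D(G\setminus\ell_j)$ corresponds to the sequences in $D(G)$ with $a_j=0$. I would treat this case separately, with the convention $\binom{-1}{0}=1$.

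\textbf{Main obstacle.} The only substantive input is the unit case, Lemma \ref{lem: lattice-points-draconion-sequences}, which I am allowed to assume. The delicate point is checking carefully that the draconian condition for $G_{\mathbf y}$ depends only on the block sums. That check uses the monotonicity argument in Step 2, and the monotonicity holds because the entries are nonnegative.
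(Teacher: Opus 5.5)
Your argument is correct and is essentially the paper's route: the paper cites this lemma from Postnikov as a consequence of Lemma \ref{lem: lattice-points-draconion-sequences}, and your Steps 1--2 are exactly the clone-graph reduction $G^{(\vct{k})}$ together with Lemma \ref{lem:group-sums-draconion-sequences} that the paper uses to prove Theorem \ref{thm:general-formula-for-lattice-points}, except that each clone group is counted by the $\binom{y_j+a_j-1}{a_j}$ weak compositions instead of by $c(a_j,k_j)$. The \emph{polymatroid/transversal exchange argument} announced in your opening is never actually needed, and your treatment of $y_j=0$ is right, via $\binom{-1}{0}=1$ and $\binom{a_j-1}{a_j}=0$ for $a_j\ge 1$.
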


\begin{remark}\label{rem:validity-of-negative-coefficients-P_G}
    When some of $y_1, \dots, y_m$ are negative integers, the formula \eqref{eq:simplex-count} in Lemma \ref{lem:lattice-points-formula-P_G} may still apply to $P_G(y_1,\dots,y_m).$ More precisely, if $P_G(y_1,\dots,y_m)$ satisfies
    \begin{equation}\label{eq:Minkowski-summand-P_G}
        P_G(y_1,\dots,y_m) + \sum_{i\,:\,y_i < 0}|y_i|\Delta^0_{I_i} = \sum_{i\,:\, y_i\geq0}y_i\Delta^0_{I_i},
    \end{equation}
    then formula \eqref{eq:simplex-count} remains valid. This follows from the property of type $B$ generalized permutohedra established in \cite[Theorem 6.11]{Bastidas2021} together with the fact that Minkowski sums (and differences) of simplices are \emph{deformations} of a \emph{permutohedron} (see \cite[Remark 6.8]{Postnikov2009}).
\end{remark}

The terms of highest degree of the multivariate polynomial (in variables $y_1, \dots, y_m$) in \eqref{eq:simplex-count} give a volume formula as stated in the next lemma.

\begin{lemma}[\cite{Postnikov2009}]\label{lem: volume-P_G}
   Let $y_1,\ldots,y_m$ be nonnegative real numbers. Suppose that $P_G(y_1,\ldots,y_m)$ is $d$-dimensional.  Then,
\begin{equation}
\Vol(P_G(y_1,\ldots,y_m))
=\sum_{\substack{\vct{a}\in D(G)\\|\vct{a}|=d}}
\frac{y_1^{a_1}\cdots y_m^{a_m}}{a_1! \cdots a_m!},
\label{eq: volume-P_G}
\end{equation}
 where $|\vct{a}| = a_1+\cdots+a_m$ is the sum of the components of $\vct{a}$.
\end{lemma}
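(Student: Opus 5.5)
The plan is to read off the volume as the leading coefficient of an Ehrhart polynomial, using the lattice-point formula of Lemma \ref{lem:lattice-points-formula-P_G}. We do this first for integer parameters and then extend to real parameters by homogeneity and continuity.

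\textbf{Integer case.} Fix nonnegative integers $y_1,\ldots,y_m$. Since $P_G(ty_1,\ldots,ty_m)=tP_G(y_1,\ldots,y_m)$ for $t\in\Z_{\ge0}$, the function $t\mapsto |P_G(ty_1,\ldots,ty_m)\cap\Z^n|$ is the Ehrhart polynomial of the lattice polytope $P_G(y_1,\ldots,y_m)$. By the properties in Section \ref{sec:Ehrhart-theory}, it has degree $d$ and leading coefficient $\Vol(P_G(y_1,\ldots,y_m))$. Because $\vct 0\in P_G$, its affine span is a linear subspace, and the relative lattice is the one used in that section.

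On the other hand, Lemma \ref{lem:lattice-points-formula-P_G} expresses the same polynomial as
\[
\sum_{\vct a\in D(G)}\prod_{i=1}^m\binom{ty_i+a_i-1}{a_i}.
\]
For each factor we check two cases:
\begin{itemize}
\item if $a_i=0$, the factor is $1$;
\item if $a_i\ge1$ and $y_i=0$, the factor vanishes identically;
\item if $a_i\ge1$ and $y_i>0$, the factor is a polynomial in $t$ of degree $a_i$ with leading coefficient $y_i^{a_i}/a_i!$.
\end{itemize}
Hence the term indexed by $\vct a$ is either identically zero, or a polynomial in $t$ of degree $|\vct a|$ with leading coefficient $\prod_i y_i^{a_i}/a_i!>0$.

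Let $D'$ be the largest $|\vct a|$ among the non-vanishing terms. The coefficient of $t^{D'}$ in the sum is a sum of strictly positive numbers, so no cancellation occurs, and $D'$ equals the degree $d$. Consequently every $\vct a$ with $|\vct a|>d$ gives a vanishing term, i.e.\ $\prod_i y_i^{a_i}=0$. Terms with $|\vct a|<d$ do not contribute to the coefficient of $t^d$. Comparing coefficients of $t^d$ therefore yields \eqref{eq: volume-P_G} for integer $y$, including the statement that the terms in the formula with $|\vct a|=d$ are the only relevant ones.

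\textbf{Extension to real parameters.} Fix the support $S=\{i\mid y_i>0\}$. The dimension of $P_G(y_1,\ldots,y_m)=\sum_{i\in S}y_i\Delta^0_{I_i}$ depends only on $S$: it is the dimension of the linear span of $\{\vct e_j\mid j\in\bigcup_{i\in S}I_i\}$, since each $\Delta^0_{I}$ contains $\vct 0$. The relative lattice is therefore also fixed.

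For rational $y$ with support $S$, choose an integer $N$ with $Ny$ integral. Both sides of \eqref{eq: volume-P_G} are homogeneous of degree $d$ in $y$. The left side scales as $\Vol(NP)=N^d\Vol(P)$, and on the right every monomial with $|\vct a|=d$ scales by $N^d$. The rational case thus follows from the integer case.

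For real $y$, approximate by rationals with the same support. The left side is continuous in $y$, because volume is continuous under Hausdorff convergence within the fixed $d$-dimensional subspace; equivalently, it is the mixed-volume polynomial in $(y_i)_{i\in S}$. The right side is a polynomial in $y$. The identity therefore extends by density.

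\textbf{Main obstacle.} The only delicate point is the no-cancellation argument. It must show both that no term with $|\vct a|>d$ survives and that the coefficient of $t^d$ is exactly the stated sum. The positivity of the leading coefficients of the individual terms handles both.
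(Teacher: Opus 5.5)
Your proof is correct and follows the route the paper itself indicates. The paper cites this lemma from Postnikov and justifies it only by remarking that it comes from the top-degree terms of the lattice-point formula in Lemma~\ref{lem:lattice-points-formula-P_G}, which is exactly what you extract; your handling of the vanishing factors when some $y_i=0$ (so that the degree is the actual dimension $d$, not $\max_{\vct a\in D(G)}|\vct a|$) and your extension from integer to real parameters by homogeneity and continuity fill in details the paper leaves implicit.
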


Note that one can compute the Ehrhart polynomial of $P_G(y_1,\ldots,y_m)$ by replacing $(y_1,\ldots,y_m)$ in Equation~\eqref{eq:simplex-count} by $(y_1t,\ldots,y_mt)$. It is easy to see that if $y_1,\ldots,y_m$ are nonnegative integers, then $P_G(y_1,\ldots,y_m)$ is Ehrhart positive.

\section{Minkowski Sums of Cross Polytopes}\label{sec: sums-of-cross-polytopes}

In this section, we provide examples of the polytopes $Q_G(k_1,\dots,k_m)$ defined in \eqref{eq:cross-polytope} and use their symmetries, along with the results from Minkowski sums of simplices, to establish their basic properties.
\begin{figure}
    \centering
    \includegraphics[width=1\linewidth]{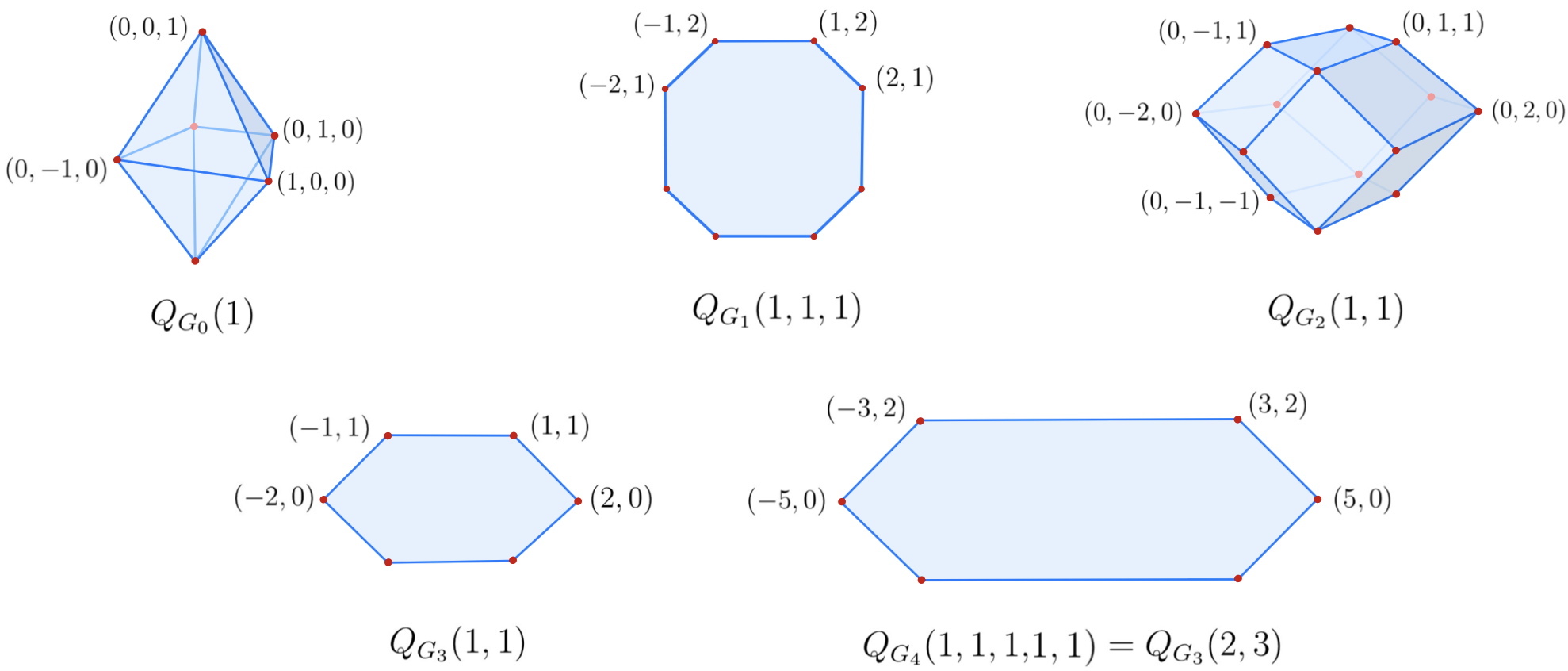}
    \caption{Minkowski sums of cross polytopes}
    \label{fig:cross-polytopes}
\end{figure}

\begin{example}
 Let $G_0, G_1, \dots, G_4$ be the bipartite graphs shown in Figure \ref{fig:graphs-simplices}. Then, the following polytopes are shown in Figure \ref{fig:cross-polytopes}.
 \begin{align*}
     Q_{G_0}(1) &= \diamondsuit_{[3]} \subset \R^3,\\
     Q_{G_1}(1,1,1) &= \diamondsuit_{\{1\}} +\diamondsuit_{[2]} + \diamondsuit_{\{2\}} \subset \R^2,\\
     Q_{G_2}(1,1) &= \diamondsuit_{[2]} + \diamondsuit_{\{2,3\}} \subset \R^3,\\
     Q_{G_3}(1,1) &= \diamondsuit_{[2]} + \diamondsuit_{\{1\}}\subset \R^2,\\
     Q_{G_4}(1,1,1,1,1) &= 2\diamondsuit_{[2]} + 3\diamondsuit_{\{1\}} = Q_{G_3}(2,3) \subset \R^2.
 \end{align*}
 \end{example}

The dimension of $Q_G(k_1,\dots,k_m)$ can be computed in terms of the bipartite graph $G$.
\begin{proposition}\label{prop:dim-of-Q_G}
Let $k_1,\ldots,k_m$ be positive real numbers and write $U := I_1\cup \cdots \cup I_m.$ Then,
\begin{align}\label{eq: dim-of-Q_G}
\dim(Q_G(k_1,\ldots,k_m)) = |U| = \#(\text{right vertex of $G$ whose degree is at least one}).
\end{align}
\end{proposition}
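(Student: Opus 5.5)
The plan is to prove the two inequalities $\dim(Q_G(k_1,\ldots,k_m)) \le |U|$ and $\dim(Q_G(k_1,\ldots,k_m)) \ge |U|$ separately. Throughout, write $Q := Q_G(k_1,\ldots,k_m)$.

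For the upper bound, observe that each summand $k_j\diamondsuit_{I_j}$ is the convex hull of the points $\pm k_j\vct{e}_i$ with $i\in I_j\subset U$. Hence every summand lies in the coordinate subspace $V_U := \operatorname{span}(\vct{e}_i\mid i\in U)$. Since $V_U$ is a linear subspace, it is closed under Minkowski sums. Therefore $Q\subset V_U$, which gives $\dim(Q)\le \dim V_U = |U|$.

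For the lower bound, I will exhibit $|U|+1$ affinely independent points of $Q$. The origin lies in every $\diamondsuit_{I_j}$, since it is the average of $\vct{e}_i$ and $-\vct{e}_i$ for any $i\in I_j$, and the sets $I_j$ are nonempty whenever they contribute to $U$. It follows that $\vct{0}\in Q$, and more generally $k_j\diamondsuit_{I_j}\subset Q$ for each $j$, because we may take $\vct{0}$ in all the other summands.

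Now fix $i\in U$ and choose $j$ with $i\in I_j$. Then $k_j\vct{e}_i\in k_j\diamondsuit_{I_j}\subset Q$, and $k_j>0$ by hypothesis. Doing this for every $i\in U$ gives the points $\vct{0}$ together with positive multiples of $\vct{e}_i$ for each $i\in U$. These $|U|+1$ points are affinely independent, so $\dim(Q)\ge|U|$.

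Combining the two bounds gives $\dim(Q)=|U|$. The second equality in \eqref{eq: dim-of-Q_G} is a restatement: by \eqref{eq:Ij}, $i\in U$ exactly when $r_i$ is adjacent to some $\ell_j$, that is, when $r_i$ has degree at least one.

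There is no genuine obstacle here. The only point needing care is the role of the hypothesis that the $k_j$ are strictly positive: it guarantees that $k_j\vct{e}_i\neq\vct{0}$ in the lower-bound step, which is exactly where the affine independence comes from.
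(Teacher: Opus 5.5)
Your proof is correct and matches the paper's argument: the upper bound comes from $x_i=0$ for $i\notin U$, and the lower bound from the $|U|+1$ affinely independent points $\vct{0}$ and $k_j\vct{e}_i$ for $i\in U$. You also justify why $\vct{0}\in Q$, so that $k_j\diamondsuit_{I_j}\subset Q$; the paper leaves this implicit, and it tacitly uses that every $I_j$ is nonempty, as the paper's definition of $\diamondsuit_I$ assumes.
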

\begin{proof}
By the definition of $Q_G(k_1, \dots, k_m)$, every $(x_1, \dots, x_n) \in Q_G(k_1, \dots, k_m)$ satisfies $x_i = 0$ for all $i \in [n]\setminus U.$ Thus, the dimension of $Q_G(k_1, \dots, k_m)$ is at most $|U|$. Additionally, for every $i \in U$, there exists $j \in [m]$ such that $i \in I_j$. Thus, the points $\vct{0}$ and $k_j\vct{e}_i$ lie in $Q_G(k_1, \dots, k_m)$ for all $i \in U$, and some $j \in [m]$. Thus, $Q_G(k_1, \dots, k_m)$ contains $|U|+1$ affinely independent points. Hence, the dimension of $Q_G(k_1, \dots, k_m)$ is at least $|U|.$ This gives the first equality in \eqref{eq: dim-of-Q_G}. The second equality in \eqref{eq: dim-of-Q_G} follows from the fact that the elements of $U$ are precisely those right vertices of $G$ whose degrees are at least one.
\end{proof}

The next proposition shows that part of $Q_G(k_1, \dots, k_m)$ that lies on the first orthant is precisely the polytope $P_G(k_1, \dots, k_m)$ defined in Section~\ref{sec: sums-of-simplices}. This is also evident in the examples given in Figure \ref{fig:graphs-simplices} and Figure \ref{fig:cross-polytopes}. 

\begin{proposition}\label{prop: Q_G-first-orthant}
Suppose that $k_1,\ldots,k_m$ are nonnegative real numbers. Then,
\[
Q_G(k_1,\ldots,k_m)\cap\R_{\ge 0}^n
=k_1\Delta_{I_1}^0+\cdots+k_m\Delta_{I_m}^0 = P_G(k_1, \dots, k_m).
\]
\end{proposition}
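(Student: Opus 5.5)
We prove the two inclusions separately.

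The inclusion $P_G(k_1,\dots,k_m)\subset Q_G(k_1,\dots,k_m)\cap\R^n_{\ge0}$ is immediate. For each $j$ we have $\Delta^0_{I_j}\subset\diamondsuit_{I_j}$. Indeed, $\vct 0$ is the midpoint of $\vct e_i$ and $-\vct e_i$ for any $i\in I_j$, and $I_j\neq\varnothing$. Scaling by $k_j\ge0$ and summing gives $P_G\subset Q_G$. Every point of $P_G$ has nonnegative coordinates, since each summand lies in $\R^n_{\ge0}$. If some $I_j$ is empty, the convention for $\diamondsuit_\varnothing$ must be taken as $\{\vct 0\}$ to match $\Delta^0_\varnothing$; we assume the $I_j$ are nonempty or adopt this convention.

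For the reverse inclusion, take $\vct x\in Q_G\cap\R^n_{\ge0}$ and write $\vct x=\sum_j k_j\vct u^{(j)}$ with $\vct u^{(j)}\in\diamondsuit_{I_j}$. Each $\vct u^{(j)}$ is supported on $I_j$ and satisfies $\sum_{i}|u^{(j)}_i|\le 1$. Now replace $\vct u^{(j)}$ by $|\vct u^{(j)}|$, taken coordinatewise. This point lies in $\Delta^0_{I_j}$, since it is nonnegative, supported on $I_j$, and has coordinate sum at most $1$. Hence $\vct y:=\sum_j k_j|\vct u^{(j)}|\in P_G$.

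Moreover $\vct y\ge\vct x$ coordinatewise. By the triangle inequality and nonnegativity of $\vct x$,
\[
y_i=\sum_j k_j|u^{(j)}_i|\ge \Bigl|\sum_j k_j u^{(j)}_i\Bigr| = x_i .
\]

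It therefore suffices to show that $P_G$ is closed downward in the nonnegative orthant. That is, if $\vct 0\le\vct x\le\vct y$ and $\vct y\in P_G$, then $\vct x\in P_G$. This follows from Lemma~\ref{lem:inequalities-P_G}. The defining inequalities of $P_G$ are $x_i\ge0$ together with upper bounds $\sum_{j\in J}x_j\le\sum_{I_r\cap J\neq\varnothing}y_r$. These upper bounds have nonnegative coefficients on the left, so decreasing coordinates while staying nonnegative preserves them. The same argument also works directly without the lemma: each simplex $\Delta^0_{I}$ is down-closed in the orthant, and one can decrease the summands' coordinates one at a time.

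The only real care needed is in this downward-closure step. There we apply Lemma~\ref{lem:inequalities-P_G} with real parameters $k_j\ge0$, which the lemma permits. Combining this with $\vct 0\le\vct x\le\vct y\in P_G$ gives $\vct x\in P_G$, which completes the reverse inclusion.
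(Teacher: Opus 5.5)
Your proof is correct and follows essentially the same idea as the paper's proof. Both replace the cross-polytope summands of $\vct{x}$ by nonnegative summands in $k_j\Delta^0_{I_j}$ that dominate $\vct{x}$, and then shrink them. The difference is in the shrinking step: you get it from the inequality description in Lemma~\ref{lem:inequalities-P_G}, while the paper does it by hand without citing that lemma. In coordinate $i$, the paper discards the negative contributions $x^{(j)}_i<0$ and rescales the nonnegative ones by $x_i/a_i\in[0,1]$, where $a_i$ is their sum; this is your alternative of decreasing the summands one coordinate at a time, made explicit.
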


\begin{proof}
As $k_i\Delta^0_{I_i} \subset k_i\diamondsuit_{I_i}$ for all $i \in [m],$ it follows that $Q_G(k_1,\ldots,k_m)\cap\R_{\ge 0}^n
\supseteq P_G(k_1, \dots, k_m).
$

Now consider $\vct{x} = (x_1, \dots, x_n) \in Q_G(k_1,\ldots,k_m)\cap\R_{\ge 0}^n.$ Then $\vct{x} = \vct{x}^{(1)}+\cdots +\vct{x}^{(m)}$ where $\vct{x}^{(j)}:= (x^{(j)}_1,\dots, x^{(j)}_n) \in k_j\diamondsuit_{I_j}$ for all $j \in [m].$ We want to show that $\vct{x} \in P_G(k_1, \dots, k_m).$ By definition, $x_i = x^{(1)}_i + \cdots + x^{(m)}_i \geq 0$ for all $i \in [n].$ For each $i \in [n],$ let $L_i := \{j \in [m] \mid x^{(j)}_i < 0\}$ be the set of the indices of negative summands. We define $a_i := \sum_{j \in [m]\setminus L_i}x^{(j)}_i$ and $b_i := \sum_{j \in  L_i}x^{(j)}_i$ (set $b_i = 0$ if $L_i = \emptyset$). Let
\begin{align*}
y^{(j)}_i :=
    \begin{cases}
    x^{(j)}_i(a_i+b_i)/a_i &\text{ if } j \in [m]\setminus L_i \text{ and } a_i > 0\\
        0&\text{ otherwise }
    \end{cases}.
\end{align*}
Then, $0 \leq y^{(j)}_i \leq |x^{(j)}_i|$ for all $j \in [m]$ and $i \in [n]$. This implies that $\vct{y}^{(j)}:= (y^{(j)}_1, \dots, y^{(j)}_n) \in k_j\Delta^0_{I_j}$ for all $j \in [m]$. Moreover, for every $i \in [n]$, we have 
\[\sum^m_{j = 1}y^{(j)}_i = \sum_{j\in [m]\setminus L_i}y^{(j)}_i = \sum_{j\in [m]\setminus L_i}\frac{x^{(j)}_i(a_i+b_i)}{a_i} = a_i + b_i = x_i.\]
Thus, we have $\vct{x} = \vct{y}^{(1)} + \cdots + \vct{y}^{(m)}.$ Hence, $\vct{x} \in P_G(k_1, \dots, k_m).$ This completes the proof.
\end{proof}

\begin{remark}\label{rem: Q_G-P_G}
A point $\vct{x}=(x_1,\ldots,x_n)$ lies in $\Delta^0_I$ if and only if every point of the form $(\pm x_1,\ldots,\pm x_n)$ lies in $\diamondsuit_I$. Thus, $\vct{x}=(x_1,\ldots,x_n)$ lies in $P_G(k_1,\ldots,k_m)$ if and only if every point of the form $(\pm x_1,\ldots,\pm x_n)$ lies in $Q_G(k_1,\ldots,k_m)$. By Proposition \ref{prop: Q_G-first-orthant}, we consequently have that part of $Q_G(k_1,\ldots,k_m)$ that lies in each orthant of $\R^n$ is a reflection of $P_G(k_1,\ldots,k_m)$ across some hyperplanes $x_i=0$. The examples given in Figure \ref{fig:graphs-simplices} and Figure \ref{fig:cross-polytopes} help illustrate this symmetry. 
\end{remark}

The following two propositions employ the symmetry of $Q_G(k_1, \dots, k_m)$ to deduce its inequality description and volume formula.

\begin{proposition}\label{prop:inequality-Q_G}
Suppose that $k_1,\ldots,k_m$ are nonnegative real numbers. Then, the polytope $Q_G(k_1,\ldots,k_m)$ is given by the set of all $\vct{x}\in\R^n$ such that, for every non-empty subset $J\subset[n]$,
\[
\sum_{j\in J}|x_j|\le \sum_{I_r\cap J\ne\varnothing}k_r.
\]
\end{proposition}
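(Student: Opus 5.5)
We prove the two inclusions separately, using the sign symmetry of $Q_G(k_1,\ldots,k_m)$ to reduce to the first orthant, where Proposition \ref{prop: Q_G-first-orthant} and Lemma \ref{lem:inequalities-P_G} already give an inequality description. For $\vct{x}\in\R^n$, write $|\vct{x}|:=(|x_1|,\ldots,|x_n|)\in\R^n_{\ge0}$.

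\textbf{Step 1 (symmetry reduction).} We first show that $\vct{x}\in Q_G(k_1,\ldots,k_m)$ if and only if $|\vct{x}|\in P_G(k_1,\ldots,k_m)$.

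Suppose $|\vct{x}|\in P_G(k_1,\ldots,k_m)$. By Remark \ref{rem: Q_G-P_G}, every sign pattern $(\pm|x_1|,\ldots,\pm|x_n|)$ lies in $Q_G(k_1,\ldots,k_m)$. One of these patterns is $\vct{x}$ itself, so $\vct{x}\in Q_G(k_1,\ldots,k_m)$.

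Conversely, each $\diamondsuit_I$ is invariant under the coordinate reflections $x_i\mapsto -x_i$. A Minkowski sum of reflection-invariant sets is reflection-invariant, so $Q_G(k_1,\ldots,k_m)$ is invariant under every such reflection. Hence $\vct{x}\in Q_G(k_1,\ldots,k_m)$ implies $|\vct{x}|\in Q_G(k_1,\ldots,k_m)\cap\R^n_{\ge0}$. By Proposition \ref{prop: Q_G-first-orthant}, this intersection is $P_G(k_1,\ldots,k_m)$.

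\textbf{Step 2 (apply the simplex inequalities).} Lemma \ref{lem:inequalities-P_G}, with $y_r=k_r\ge0$, describes $P_G(k_1,\ldots,k_m)$ by two families of conditions: nonnegativity, and $\sum_{j\in J}x_j\le\sum_{I_r\cap J\neq\varnothing}k_r$ for every nonempty $J\subset[n]$. The point $|\vct{x}|$ satisfies nonnegativity automatically. So $|\vct{x}|\in P_G(k_1,\ldots,k_m)$ holds exactly when
\[
\sum_{j\in J}|x_j|\le\sum_{I_r\cap J\ne\varnothing}k_r
\]
for every nonempty $J\subset[n]$. Combining this with Step 1 gives the claimed description of $Q_G(k_1,\ldots,k_m)$.

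\textbf{Main obstacle.} The only delicate point is the reflection invariance in Step 1, which follows from the elementwise symmetry of each $\diamondsuit_{I_r}$. All remaining input is supplied by Proposition \ref{prop: Q_G-first-orthant} and Lemma \ref{lem:inequalities-P_G}. We take Lemma \ref{lem:inequalities-P_G} as valid for nonnegative real $y_r$, as it is stated; if it were only available for integers, we would need a scaling or continuity argument at that point.
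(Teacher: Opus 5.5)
Your proof is correct and follows the paper's argument: the paper's proof only cites Lemma~\ref{lem:inequalities-P_G}, Proposition~\ref{prop: Q_G-first-orthant}, and Remark~\ref{rem: Q_G-P_G}. Your Steps~1--2 spell out how these combine. First, reflection invariance of $Q_G(k_1,\ldots,k_m)$ gives $\vct{x}\in Q_G(k_1,\ldots,k_m)$ if and only if $(|x_1|,\ldots,|x_n|)\in P_G(k_1,\ldots,k_m)$, an equivalence the paper itself invokes in the proof of Lemma~\ref{lem:Q_G-F_G}; then the inequality description of $P_G(k_1,\ldots,k_m)$ finishes the argument.
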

\begin{proof}
This follows from Lemma \ref{lem:inequalities-P_G}, Proposition \ref{prop: Q_G-first-orthant}, and Remark \ref{rem: Q_G-P_G}.
\end{proof}

\begin{proposition}\label{prop:volume-Q_G}
Let $k_1,\ldots,k_m$ be nonnegative real numbers. Suppose that $Q_G(k_1,\ldots,k_m)$ is $d$-dimensional. Then, the volume of $Q_G(k_1,\ldots,k_m)$ is given by
\[
2^d\sum_{\substack{\vct{a}\in D(G)\\|\vct{a}|=d}}
\frac{k_1^{a_1}\cdots k_m^{a_m}}{a_1!\cdots a_m!},
\]
where $|\vct{a}| = a_1+\cdots+a_m$ is the sum of the components of $\vct{a}$.
\end{proposition}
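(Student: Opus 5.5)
By Proposition \ref{prop:dim-of-Q_G}, $d=|U|$ where $U=I_1\cup\cdots\cup I_m$, and every point of $Q_G(k_1,\ldots,k_m)$ has zero coordinates outside $U$. I would work inside the coordinate subspace $\R^U\cong\R^d$, whose lattice is $\Z^U$, so that volume relative to $\aff(Q_G)$ is ordinary Lebesgue volume there. The plan is to cut $Q_G$ into its $2^d$ orthant pieces, show each has the volume of $P_G(k_1,\ldots,k_m)$, and then apply Lemma \ref{lem: volume-P_G}.

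\textbf{Step 1 (decomposition).} For each sign vector $\epsilon\in\{\pm1\}^U$, let $O_\epsilon$ be the closed orthant of $\R^U$ with $\epsilon_ix_i\ge 0$. By Proposition \ref{prop: Q_G-first-orthant} and Remark \ref{rem: Q_G-P_G}, we have
\[
Q_G\cap O_\epsilon=\rho_\epsilon\bigl(P_G(k_1,\ldots,k_m)\bigr),
\]
where $\rho_\epsilon$ is the coordinate reflection $x_i\mapsto\epsilon_ix_i$. This map is a lattice-preserving linear isometry, so it preserves volume. One can also see the decomposition directly from Proposition \ref{prop:inequality-Q_G}: the defining inequalities depend only on $|x_j|$. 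The $2^d$ orthant pieces cover $Q_G$, and any two of them meet inside a coordinate hyperplane $\{x_i=0\}$, which has measure zero. Hence
\[
\Vol(Q_G)=2^d\,\Vol(P_G(k_1,\ldots,k_m)).
\]

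\textbf{Step 2 (dimension of $P_G$).} Lemma \ref{lem: volume-P_G} requires $P_G$ to be $d$-dimensional with the same $d$. The argument of Proposition \ref{prop:dim-of-Q_G} transfers verbatim. $P_G$ lies in $\R^U$, and it contains $\vct{0}$ and $k_j\vct{e}_i$ for each $i\in U$ and some $j$ with $i\in I_j$, giving $d+1$ affinely independent points. This uses $k_j>0$; if some $k_j=0$, I would first delete that left vertex. Deleting it changes neither the polytopes nor the sum, since $0^{a_j}=0$ for $a_j>0$ and the draconian condition restricted to the remaining vertices is the same. After the deletion, $U$ is recomputed, and the hypothesis $\dim Q_G=d$ makes everything consistent.

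Then Lemma \ref{lem: volume-P_G} gives
\[
\Vol(P_G(k_1,\ldots,k_m))=\sum_{\vct{a}\in D(G),\,|\vct{a}|=d}\frac{k_1^{a_1}\cdots k_m^{a_m}}{a_1!\cdots a_m!},
\]
and multiplying by $2^d$ yields the claimed formula.

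\textbf{Main obstacle.} The only delicate point is the bookkeeping in Step 2: ensuring that the relative volumes in Lemma \ref{lem: volume-P_G} and in the statement are measured with respect to the same lattice $\Z^U$, and handling zero $k_j$. I expect both to be routine once the reduction to $\R^U$ is made.
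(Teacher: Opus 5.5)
Your proposal is correct and follows essentially the paper's route. Like the paper, you restrict to the coordinate subspace $\R^U$ (the paper does this by passing to the subgraph $G'$ with degree-zero right vertices removed and noting $D(G)=D(G')$), use orthant symmetry to get $\Vol(Q_G)=2^d\Vol(P_G)$, and then apply Lemma~\ref{lem: volume-P_G}. Your explicit handling of $k_j=0$ by deleting those left vertices is a small improvement: the paper cites Proposition~\ref{prop:dim-of-Q_G}, which is stated only for positive $k_i$, even though the statement allows nonnegative ones.
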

\begin{proof}
Let $U := I_1\cup \cdots \cup I_m.$ By Proposition \ref{prop:dim-of-Q_G}, we have $d = \dim(Q_G(k_1, \dots, k_m)) = |U|$. We define $G'$ to be the induced subgraph of $G$ obtained by removing every right vertex of degree zero of $G$. By Proposition \ref{prop:dim-of-Q_G}, $Q_{G'}(k_1, \dots, k_m)$ is a polytope of dimension $d$ in $\R^{d}$. Since every point $\vct{x} \in Q_G(k_1, \dots, k_m)$ satisfies $x_i = 0$ for all $i \in [n]\setminus U,$ one sees that $Q_G(k_1, \dots, k_m)$ and $Q_{G'}(k_1, \dots, k_m)$ have the same volume. By Proposition \ref{prop: Q_G-first-orthant}, the polytope $P_{G'}(k_1, \dots, k_m) = Q_{G'}(k_1, \dots, k_m)\cap \R^{d}_{\geq 0}$ is also full dimensional in $\R^{d}$, i.e., $d$-dimensional. The symmetry of $Q_{G'}(k_1, \dots, k_m)$ as described in Remark \ref{rem: Q_G-P_G} and Lemma \ref{lem: volume-P_G} implies that
\begin{align*}\label{eq:volume-calculation}
   \Vol(Q_G(k_1, \dots, k_m)) &= \Vol(Q_{G'}(k_1,\dots, k_m))\\ &= 2^d\Vol(P_{G'}(k_1,\dots, k_m))\\
   &= 2^d\sum_{\substack{\vct{a}\in D(G')\\|\vct{a}|=d}}
\frac{k_1^{a_1}\cdots k_m^{a_m}}{a_1!\cdots a_m!}
\end{align*}
The desired formula given in Proposition \ref{prop:volume-Q_G} follows by noticing that $D(G) = D(G').$
\end{proof}

\section{Lattice-Point Enumeration}\label{sec:lttice-point-enumeration}

In this section, we restrict our attention to those $Q_G(k_1,\ldots,k_m)$ that are lattice polytopes and study their lattice-point enumeration and related consequences.

\subsection{Counting by the support-enumerator}

For $\vct{a}\in\Z_{\ge0}^m$, we write
\[
|\vct{a}|:=a_1+\cdots+a_m,\qquad
\supp(\vct{a}):=\{i\mid a_i>0\},\qquad
s(\vct{a}):=|\supp(\vct{a})|.
\]
Recall the definition of the support-enumerator $F_G(z)$ of $D(G)$ from \eqref{eq:FG}. Since $|\vct{a}|\le n$ for every $\vct{a}\in D(G)$, one sees that $F_G(z)$ is a polynomial of degree at most $n$.

\begin{lemma}\label{lem:Q_G-F_G}
We have
\begin{align}
\label{eq:sign-fibers}    |Q_G(1, \dots, 1)\cap\Z^n|
= F_{G^*}(2)
\end{align}
\end{lemma}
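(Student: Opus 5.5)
We decompose the lattice points of $Q_G(1,\dots,1)$ according to their sign pattern and reduce to the first orthant. For $\vct{x}\in\Z^n$, write $|\vct{x}|:=(|x_1|,\dots,|x_n|)$. By Proposition \ref{prop:inequality-Q_G}, membership of $\vct{x}$ in $Q_G(1,\dots,1)$ depends only on $|\vct{x}|$. By Proposition \ref{prop: Q_G-first-orthant} (or Remark \ref{rem: Q_G-P_G}), $\vct{x}\in Q_G(1,\dots,1)$ holds exactly when $|\vct{x}|\in P_G(1,\dots,1)$.

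Hence the map $\vct{x}\mapsto|\vct{x}|$ sends $Q_G(1,\dots,1)\cap\Z^n$ onto $P_G(1,\dots,1)\cap\Z^n$. Its fibers are easy to describe: a point $\vct{b}\in P_G(1,\dots,1)\cap\Z^n$ has exactly $2^{s(\vct{b})}$ preimages. Each nonzero coordinate admits two signs, and each zero coordinate admits only one. Therefore
\[
|Q_G(1,\dots,1)\cap\Z^n|=\sum_{\vct{b}\in P_G(1,\dots,1)\cap\Z^n}2^{s(\vct{b})}.
\]

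Next we identify the index set of this sum. Apply Lemma \ref{lem: draconion-sequences-dual} to the graph $G^*$, using $(G^*)^*=G$. This gives $P_G(1,\dots,1)\cap\Z^n=D(G^*)$, as already noted after that lemma. The sum therefore becomes $\sum_{\vct{b}\in D(G^*)}2^{s(\vct{b})}$, which is $F_{G^*}(2)$ by the definition \eqref{eq:FG}.

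The only step needing care is the fiber description. It requires $Q_G(1,\dots,1)$ to be invariant under every coordinate sign flip, and its intersection with the closed orthant to be exactly $P_G(1,\dots,1)$, including boundary points with zero coordinates. Both facts are supplied directly by Proposition \ref{prop:inequality-Q_G} together with Proposition \ref{prop: Q_G-first-orthant}, so we expect no real obstacle. The lemma is essentially a bookkeeping consequence of the orthant symmetry and Postnikov's duality.
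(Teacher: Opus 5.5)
Your proposal is correct and follows the paper's proof essentially step for step. You reduce membership of $\vct{x}$ in $Q_G(1,\dots,1)$ to membership of $(|x_1|,\dots,|x_n|)$ in $P_G(1,\dots,1)$, count each fiber as $2^{s(\vct{a})}$, and identify $P_G(1,\dots,1)\cap\Z^n$ with $D(G^*)$ via Postnikov's duality lemma. The only cosmetic difference is that you justify the orthant reduction through Proposition~\ref{prop: Q_G-first-orthant}, where the paper uses Lemma~\ref{lem:inequalities-P_G}; both are valid.
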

\begin{proof}
By Lemma \ref{lem:inequalities-P_G} and Proposition \ref{prop:inequality-Q_G}, we have $\vct{x} = (x_1, \dots, x_n) \in Q_G(1, \dots, 1)$ if and only if $(|x_1|, \dots, |x_n|) \in P_G(1, \dots, 1)$. Now fix $\vct{a}\in P_G(1, \dots, 1)\cap\Z^n$. The lattice points $\vct{x}\in Q_G(1,\dots,1)\cap\Z^n$ satisfying $(|x_1|, \dots, |x_n|)=\vct{a}$ are obtained by assigning an independent sign to every positive coordinate of $\vct{a}$. A zero coordinate has only one possible sign, while each nonzero coordinate has two. Hence, there are exactly $2^{s(\vct{a})}$ such points $\vct{x}$. Summing over all $\vct{a}\in P_G(1,\dots, 1)\cap\Z^n$ gives
\begin{align}\label{eq:Q_G-F_G}
    |Q_G(1, \dots, 1)\cap\Z^n|
=\sum_{\vct{a}\in P_G(1, \dots, 1)\cap\Z^n}2^{s(\vct{a})}. 
\end{align}
By Lemma \ref{lem: draconion-sequences-dual}, we have $P_G(1,\dots, 1)\cap\Z^n = D(G^*).$ Thus, we see that \eqref{eq:sign-fibers} follows immediately from \eqref{eq:Q_G-F_G}.
\end{proof}

We note that Lemma \ref{lem:Q_G-F_G} implies $|Q_{G^*}(1, \dots, 1)\cap\Z^m|= F_{G}(2).$ By viewing $Q_{G^*}(1, \dots, 1)$ as the "dual" polytope of $Q_{G}(1, \dots, 1)$ and vice versa, we aim to establish that this duality preserves the number of lattice points.

\subsection{The support-enumerator is an $h^*$-polynomial}\label{sec:h-polynomial}

We now show that the support-enumerator defined in \eqref{eq:FG} is the $h^*$-polynomial of a root polytope. As a consequence, this result provides a simple method for computing the $h^*$-polynomial of the corresponding root polytope and, consequently, its volume and Ehrhart polynomial. Moreover, it reveals connections among several seemingly distinct combinatorial objects.

Let $\vct{e}_1,\ldots,\vct{e}_m$ be the standard basis of $\R^{m}$ and let $\vct{f}_1,\ldots,\vct{f}_n$ be the standard basis of $\R^{n}$. Given a bipartite graph $G=(L\sqcup R,E)$ on $m$ left and $n$ right vertices, the \emph{root polytope} of $G$ is
\begin{equation}
\mathcal R_{G}
:=\conv(\vct{e}_i+\vct{f}_j\mid (\ell_i,r_j)\in E)
\subset\R^{m}\oplus\R^{n}.
\label{eq:root-polytope}
\end{equation}

The \emph{augmented bipartite graph} of $G$ is the bipartite graph 
\[
\widehat G=(\widehat L\sqcup\widehat R,\widehat E)
\]
where $\widehat L:=\{\ell_0\}\sqcup L, 
\widehat R:=\{r_0\}\sqcup R,$ and $ 
\widehat E
:=E\cup\{(\ell_0,v)\mid v\in\widehat R\}
\cup\{(u,r_0)\mid u\in L\}.$
That is, $\widehat G$ is obtained from $G$ by adding the left vertex $\ell_0$ and the right vertex $r_0$, and letting $\ell_0$ be adjacent to every right vertex, and $r_0$ be adjacent to every left vertex. Thus, $\widehat G$ is connected.

By definition, we have that $R_{\widehat G}$ is a polytope in $\R^{m+1}\oplus \R^{n+1}$. For every nonnegative integer $k$, a point $\vct{p}\in k\mathcal R_{\widehat G}$ can be represented in the form
\begin{equation}
\vct{p}
=\sum_{(\ell_i,r_j)\in\widehat E}\lambda_{ij}(\vct{e}_i+\vct{f}_j),
\text{ for some }
\lambda_{ij}\ge0 \text{ such that }
\sum_{(\ell_i,r_j)\in\widehat E}\lambda_{ij}=k.
\label{eq:flow}
\end{equation}
The coefficients $\lambda_{ij}$ can be regarded as a nonnegative \emph{weight} of the edge $(\ell_i,r_j) \in \widehat E$. By writing $\vct{p} = (\alpha_0,\dots, \alpha_m,\beta_0,\dots, \beta_n)$, we see that the left coordinates
\[
\alpha_i:=\sum_{j=0}^n\lambda_{ij}
\]
are the \emph{$i^{\mathrm{th}}$-row-sums} of this weight, and the right coordinates
\[
\beta_j:=\sum_{i=0}^m\lambda_{ij}
\]
are the \emph{$j^{\mathrm{th}}$-column-sums}. Thus, a point in the root polytope $R_{\widehat G}$ records possible pairs of \emph{left marginals} $(\alpha_0,\dots, \alpha_m)$ and \emph{right marginals} $(\beta_0,\dots,\beta_n)$ of the weights on the edges of $\widehat G$.

\begin{lemma}\label{lem:dimension-root-polytope}
If $G$ is a connected bipartite graph on $m$ left vertices and $n$ right vertices, then
\[
\dim\mathcal R_G=m+n-2.
\]
In particular,
\[
\dim\mathcal R_{\widehat G}=m+n.
\]
\end{lemma}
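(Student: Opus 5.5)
The plan is to bound the dimension from above by two independent linear constraints, and from below by exhibiting enough affinely independent vertices.

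For the upper bound, every vertex $\vct{e}_i+\vct{f}_j$ with $(\ell_i,r_j)\in E$ satisfies both $\sum_{i}x_i=1$ on the left coordinates and $\sum_j y_j=1$ on the right coordinates. These two affine equations are linearly independent, so $\aff(\mathcal R_G)$ lies in a subspace of dimension $m+n-2$. Here we assume $m,n\ge1$, which holds since $G$ is connected with at least one edge; the degenerate case of a single vertex can be excluded or treated separately.

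For the lower bound, the linear span of the vertices should have dimension $m+n-1$, because then the affine hull has dimension $m+n-2$ (the vertices lie on the hyperplane $\sum x_i=1$ not through the origin). Choose a spanning tree $T$ of $G$; it has $m+n-1$ edges. I would show that the vectors $\vct{e}_i+\vct{f}_j$ for $(\ell_i,r_j)\in T$ are linearly independent. This is the standard fact that the unsigned incidence matrix of a tree, or more generally of a bipartite forest, has full column rank.

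To prove that fact, I would induct on the tree by peeling off a leaf. Suppose $\sum_{e\in T}c_e(\vct{e}_i+\vct{f}_j)=0$. A leaf vertex appears in exactly one edge, so the coefficient of that edge must vanish. Removing this edge leaves a smaller tree, and by induction all remaining coefficients vanish.

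The only step needing care is the passage from linear rank to affine dimension. It works because all the points lie on an affine hyperplane avoiding the origin, so that affine dimension equals linear rank minus one. Combining the two bounds gives $\dim\mathcal R_G=m+n-2$.

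For the second statement, $\widehat G$ is connected, as already noted in the text, and has $m+1$ left and $n+1$ right vertices. Applying the first part gives $\dim\mathcal R_{\widehat G}=(m+1)+(n+1)-2=m+n$.
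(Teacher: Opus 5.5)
Your proposal is correct and follows the paper's proof essentially step for step. The two hyperplanes $\sum_i x_i=1$ and $\sum_j y_j=1$ give the upper bound. Leaf-peeling on a spanning tree shows that its $m+n-1$ edge vectors are linearly, hence affinely, independent, which gives the lower bound. The case of $\widehat G$ then follows by applying the first part to its $m+1$ left and $n+1$ right vertices.

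Your detour through affine dimension being linear rank minus one is harmless but unnecessary, since linearly independent points are automatically affinely independent.
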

\begin{proof}
Every point $\vct{e}_i+\vct{f}_j$ of $\mathcal R_G$ lies in the two hyperplanes
\[
\sum^m_{i = 1} x_i=1,
\qquad
\sum_{j=1}^n y_j=1.
\]
Since these two hyperplanes are distinct in $\R^{m}\oplus \R^n$, it follows that $\dim\mathcal R_G\le m+n-2$.

For the reverse inequality, choose a spanning tree $T$ of $G$. The tree has $m+n-1$ edges. We claim that the $m+n-1$ vectors
\[
\{\vct{e}_i+\vct{f}_j\mid (\ell_i,r_j)\in E(T)\}
\]
are affinely independent. To see this, suppose that
\[
\sum_{(\ell_i,r_j)\in E(T)}c_{ij}(\vct{e}_i+\vct{f}_j)=\vct{0}.
\]
Choose a leaf of $T$. Since the coordinate belonging to this leaf occurs in exactly one vector in the sum, the coefficient of the unique incident edge must be zero. Remove this leaf and its edge. By repeating this process, we see by induction that $c_{ij}=0$ for all $(\ell_i,r_j) \in E(T)$. Hence, the $m+n-1$ vectors are linearly, and therefore affinely, independent. Consequently, their convex hull has dimension $m+n-2$, which then implies $\dim\mathcal R_G\ge m+n-2$. Thus, the equality follows.

The graph $\widehat G$ has $(m+1)+(n+1)=m+n+2$ vertices and is connected, hence its root polytope has dimension $m+n$.
\end{proof}


Let $G^+$ denote the induced subgraph of $\widehat G$ obtained by removing the right vertex $r_0$ of $\widehat G.$ The next lemma establishes a key connection between the root polytope $R_{\widehat G}$ and a Minkowski sum of simplices.

\begin{lemma}\label{lem:slicing-root-polytope}
Fix $\vct{\alpha}=(\alpha_0,\ldots,\alpha_m)\in\Z_{\ge0}^{m+1}$ and set $
k:=\alpha_0+\cdots+\alpha_m.$ Let 
\[H_{\vct{\alpha}} :=\{(x_0,\dots, x_m,y_0,\dots, y_n) \in \R^{m+1}\oplus \R^{n+1}\mid (x_0,\dots,x_m) = \vct{\alpha}\}\]
be an affine subspace in $\R^{m+1}\oplus \R^{n+1}.$
Then, the polytope $P_{\widehat G,\vct{\alpha}}:=kR_{\widehat G}\cap H_{\vct{\alpha}}$ is integrally equivalent to $P_{G^+}(\alpha_0,\dots, \alpha_m)$, and consequently
\begin{equation}
|k\mathcal R_{\widehat G}\cap\Z^{m+n+2}|
=\sum_{\substack{\alpha_i\ge0\\\alpha_0+\cdots+\alpha_m=k}}
|P_{G^+}(\alpha_0,\ldots,\alpha_m)\cap\Z^n|.
\label{eq:slicing}
\end{equation}
\end{lemma}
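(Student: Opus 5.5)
The plan is to describe the slice $P_{\widehat G,\vct{\alpha}}$ explicitly through the weight description \eqref{eq:flow}, and then project away the coordinates that are constant on the slice or determined by the others. The first step is to confirm that \eqref{eq:flow} describes all of $k\mathcal R_{\widehat G}$. A point of $k\mathcal R_{\widehat G}$ is $k$ times a convex combination of the vertices $\vct{e}_i+\vct{f}_j$, so its points are exactly the nonnegative combinations $\sum\lambda_{ij}(\vct{e}_i+\vct{f}_j)$ with $\sum\lambda_{ij}=k$. For such a point $(\vct{\alpha},\vct{\beta})$ we have $\alpha_i=\sum_j\lambda_{ij}\ge0$ and $\sum_i\alpha_i=\sum_j\beta_j=k$. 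Hence $H_{\vct{\alpha}}\cap k\mathcal R_{\widehat G}$ is nonempty only if $\vct{\alpha}\in\R^{m+1}_{\ge0}$ with $|\vct{\alpha}|=k$. Moreover, the slice is $\{\vct{\alpha}\}\times B_{\vct{\alpha}}$, where $B_{\vct{\alpha}}$ is the set of column-sum vectors $\vct{\beta}$ of weights on $\widehat E$ whose row sums are $\vct{\alpha}$.

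Next I would identify $B_{\vct{\alpha}}$ with $P_{G^+}(\alpha_0,\dots,\alpha_m)$. In $\widehat G$ the neighbors of $\ell_i$ ($i\ge1$) are $r_0$ together with $\{r_j: j\in I_i\}$, and the neighbors of $\ell_0$ are $r_0,r_1,\dots,r_n$. In $G^+$ the neighborhood of $\ell_i$ is $I_i$ for $i\ge1$ and $[n]$ for $i=0$. Fix a row $i$ whose weights sum to $\alpha_i$. Its contribution to $(\beta_1,\dots,\beta_n)$ is $\sum_{j\ge1}\lambda_{ij}\vct{f}_j$ with $\sum_{j\ge1}\lambda_{ij}\le\alpha_i$, and the slack $\lambda_{i0}$ is absorbed by $r_0$. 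This is precisely an arbitrary point of $\alpha_i\Delta^0_{I_i^{+}}$, where the vertex $\vct 0$ of the simplex plays the role of $r_0$.

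Summing over the rows, the projection $\pi:(\vct{\alpha},\beta_0,\beta_1,\dots,\beta_n)\mapsto(\beta_1,\dots,\beta_n)$ maps the slice onto $\sum_i\alpha_i\Delta^0_{I_i^+}=P_{G^+}(\alpha_0,\dots,\alpha_m)$. The map is surjective because any choice of points in the summands lifts to weights by setting $\lambda_{i0}$ equal to the slack.

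On $H_{\vct{\alpha}}\cap\{\sum_j\beta_j=k\}$ the map $\pi$ is an affine bijection. Its inverse is $(\beta_1,\dots,\beta_n)\mapsto(\vct{\alpha},\,k-\sum_{j\ge1}\beta_j,\,\beta_1,\dots,\beta_n)$. Since $\vct{\alpha}$ and $k$ are integral, both maps send lattice points to lattice points. This gives the integral equivalence, and in particular the slice and $P_{G^+}(\vct{\alpha})$ have the same number of lattice points.

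Finally, every lattice point of $k\mathcal R_{\widehat G}$ has an integral left part $\vct{\alpha}\in\Z^{m+1}_{\ge0}$ with $|\vct{\alpha}|=k$. Partitioning the lattice points by this $\vct{\alpha}$ and applying the bijection on each slice yields \eqref{eq:slicing}. I expect no serious obstacle. The one point needing care is the bookkeeping that the edges to $r_0$ exactly encode the inequality $\sum_{j\ge1}\lambda_{ij}\le\alpha_i$. For this it matters that every $\ell_i$, including $\ell_0$, is adjacent to $r_0$, so that each summand is the full simplex $\Delta^0$ rather than a face of it.
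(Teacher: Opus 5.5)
Your proposal is correct and follows essentially the same route as the paper. Both arguments split a point of the slice into row weights. Each row gives a point of $\alpha_i\Delta^0$, with the edge to $r_0$ absorbing the slack. Both then project onto $(\beta_1,\dots,\beta_n)$, recover $\beta_0=k-\sum_{j\ge1}\beta_j$, and partition the lattice points of $k\mathcal R_{\widehat G}$ by $\vct{\alpha}$.

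Your explicit affine inverse on the ambient subspace $H_{\vct{\alpha}}\cap\{\sum_{j=0}^n y_j=k\}$ is, if anything, a slightly cleaner justification of integral equivalence than the paper's appeal to linearity on affine hulls.
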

\begin{proof} Let
\[
\pi:\R^{m+1}\oplus\R^{n+1}\longrightarrow\R^{n}
\]
be the projection given by $\pi(x_0,\dots, x_m,y_0,\dots, y_n) := (y_1\dots, y_n)$. We first show that $\pi$ defines an invertible transformation from $\aff(P_{\widehat G,\vct{\alpha}})$ to $\aff(P_{G^+}(\alpha_0,\dots,\alpha_m))$ and preserves the lattice points between $P_{\widehat G,\vct{\alpha}}$ and $P_{G^+}(\alpha_0,\dots,\alpha_m)$. 

Note that every point $\vct{p}$ in $P_{\widehat G,\vct{\alpha}}$
 has the form $\vct{p}=(\alpha_0,\dots,\alpha_m,\beta_0,\dots, \beta_n)$
 for some nonnegative real numbers $\beta_0,\dots, \beta_n.$ By representing $\vct{p}$ as in \eqref{eq:flow}, we define for each $i\in\{0,1,\ldots,m\}$
\[
\vct{w}^{(i)}:=(\lambda_{i1},\ldots,\lambda_{in})\in\R_{\ge0}^n.
\]
Since the $i^{\mathrm{th}}$-row-sum is $\alpha_i$, we have
\[
\sum_{j=1}^n w_j^{(i)}\le\alpha_i.
\]
Therefore,
$\vct{w}^{(0)}\in\alpha_0\Delta_{[n]}^0$ and $\vct{w}^{(i)}\in\alpha_i\Delta_{I_i}^0$ for all $i\in [m]$. Since
\[
\vct{\beta}:=(\beta_1,\ldots,\beta_n)
=\sum_{i=0}^m\vct{w}^{(i)},
\]
it follows that
\[
\vct{\beta}
\in\alpha_0\Delta_{[n]}^0+\sum_{i=1}^m\alpha_i\Delta_{I_i}^0
=P_{G^+}(\alpha_0,\ldots,\alpha_m).
\]
This shows that the projection $\pi$ satisfies $\pi(\vct{p}) = \vct{\beta} \in P_{G^+}(\alpha_0,\dots, \alpha_m)$ for all $\vct{p} \in P_{\widehat G,\vct{\alpha}}$, and hence defines a map from $P_{\widehat G,\vct{\alpha}}$ to $P_{G^+}(\alpha_0,\dots, \alpha_m).$ 

Conversely, suppose
$\vct{\beta}\in P_{G^+}(\alpha_0,\ldots,\alpha_m).$ By the definition of a Minkowski sum, we may choose $\vct{w}^{(0)}\in\alpha_0\Delta_{[n]}^0$ and $
\vct{w}^{(i)}\in\alpha_i\Delta_{I_i}^0$
such that $\vct{\beta}=\sum^m_{i=0}\vct{w}^{(i)}$. Set
\[
\lambda_{ij}:=w_j^{(i)} \text{ for }j\ge1
\quad \text{and } \quad
\lambda_{i0}:=\alpha_i-\sum_{j=1}^n w_j^{(i)}\ge0.
\]

Hence, $\lambda_{ij}$ can be regarded as a nonnegative weight of the edge $(i,j)$ of $\widehat G$. All these weights of the edges of $\widehat G$ have their $i^{\mathrm{th}}$-row-sums equal to $\alpha_i$ for all $i \in [m]$. By letting
\[
\beta_0=k-\sum_{j=1}^n\beta_j,
\]
one sees that $\vct{p}:=(\vct{\alpha},\beta_0,\vct{\beta})$ is a point in $P_{\widehat G, \vct{\alpha}}$ satisfying $\pi(\vct{p}) = \vct{\beta}.$  Clearly, once $\vct{\alpha}$ and $\vct{\beta}$ are fixed, such a point $\vct{p}$ is uniquely determined. Moreover, $\vct{\beta}$ is a lattice point if and only if $\vct{p}$ is a lattice point. This shows that $\pi$ defines an invertible map from  $P_{\widehat G,\vct{\alpha}}$ to $P_{G^+}(\alpha_0,\dots, \alpha_m)$ and preserves their lattice points. The linearity of $\pi$ then implies that it actually defines an invertible affine transformation from $\aff(P_{\widehat G,\vct{\alpha}})$ to $\aff(P_{G^+}(\alpha_0,\dots,\alpha_m))$. Therefore, the two polytopes $P_{\widehat G,\vct{\alpha}}$ and $P_{G^+}(\alpha_0,\dots,\alpha_m)$ are integrally equivalent.

Finally, every lattice point of $k\mathcal R_{\widehat G}$ has a unique integral left marginal $\vct{\alpha}\in\Z_{\ge0}^{m+1}$ with $|\vct{\alpha}|=k$. Thus, summing over their cardinalities yields \eqref{eq:slicing}.
\end{proof}

As a consequence of Lemma \ref{lem:slicing-root-polytope}, we obtain a realization of the support-enumerator $F_G(z)$ as the $h^*$-polynomial of the root polytope $R_{\widehat G}$, as stated in Theorem \ref{thm:F_G-h-polynomial}. We provide a proof below.
\begin{proof}[Proof of Theorem \ref{thm:F_G-h-polynomial}]
Using the identity \eqref{eq:slicing} and Lemma \ref{lem:lattice-points-formula-P_G}, we express the Ehrhart series of $R_{\widehat G}$ as
\begin{align*}
\Ehr_{\mathcal R_{\widehat G}}(z)
&=\sum_{\vct{\alpha}\in\Z_{\ge0}^{m+1}}
|P_{G^+}(\alpha_0,\ldots,\alpha_m)\cap\Z^n|z^{|\vct{\alpha}|}\\
&=\sum_{\vct{\alpha}\in\Z_{\ge0}^{m+1}}
\sum_{\vct{a}\in D(G^+)}
\prod_{i=0}^m\binom{\alpha_i+a_i-1}{a_i}z^{\alpha_i}\\
&=\sum_{\vct{a}\in D(G^+)}
\prod_{i=0}^m\left(
\sum_{\alpha_i\ge0}\binom{\alpha_i+a_i-1}{a_i}z^{\alpha_i}
\right).
\end{align*}
For a nonnegative integer $a$, we define
\[
\Phi_a(z):=\sum_{\alpha\ge0}\binom{\alpha+a-1}{a}z^\alpha.
\]
The binomial-series identity gives
\begin{equation}
\Phi_a(z)=
\begin{cases}
\dfrac{1}{1-z},&a=0,\\[4pt]
\dfrac{z}{(1-z)^{a+1}},&a>0.
\end{cases}
\label{eq:Phi}
\end{equation}
Therefore,
\begin{equation}
\Ehr_{\mathcal R_{\widehat G}}(z)
=\sum_{\vct{a}\in D(G^+)}
\frac{z^{s(\vct{a})}}{(1-z)^{m+1+|\vct{a}|}}.
\label{eq:Ehr-Gplus}
\end{equation}

We now describe $D(G^+)$. Write $\vct{a}=(a_0,\vct{b})$ with $\vct{b}=(b_1,\ldots,b_m)$. The inequalities in the description of draconian sequences corresponding to subsets not containing $0$ imply $\vct{b}\in D(G)$. Since the neighborhood of $\ell_0$ in $G^+$ is $I_0 = [n]$, we have
\[a_0 \leq n \quad \text{and} \quad
a_0+|\vct{b}|\le n.
\]
Hence,
\begin{equation}
D(G^+)=\{(a_0,\vct{b})\mid \vct{b}\in D(G),\ 0\le a_0\le n-|\vct{b}|\}.
\label{eq:D-Gplus}
\end{equation}
Substituting \eqref{eq:D-Gplus} into \eqref{eq:Ehr-Gplus}, and setting $N_{\vct{b}}:=n-|\vct{b}|$ gives
\[
\Ehr_{\mathcal R_{\widehat G}}(z)
=\sum_{\vct{b}\in D(G)}\sum_{a_0=0}^{N_{\vct{b}}}
\frac{z^{s(\vct{b})+\mathbf 1_{a_0>0}}}
{(1-z)^{m+1+|\vct{b}|+a_0}}.
\]
Bringing all terms for a fixed $\vct{b}$ to the common denominator $(1-z)^{m+n+1}$, we see that the corresponding numerator is
\begin{align*}
z^{s(\vct{b})}
\left[(1-z)^{N_{\vct{b}}}+z\sum_{a_0=1}^{N_{\vct{b}}}(1-z)^{N_{\vct{b}}-a_0}\right]
&=z^{s(\vct{b})}
\left[(1-z)^{N_{\vct{b}}}+z\sum_{r=0}^{N_{\vct{b}}-1}(1-z)^r\right]\\
&=z^{s(\vct{b})}
\left[(1-z)^{N_{\vct{b}}}+1-(1-z)^{N_{\vct{b}}}\right]\\
&=z^{s(\vct{b})}.
\end{align*}
The same computation is valid when $N_{\vct{b}}=0$, in which case the finite sum is empty. Summing over $\vct{b}$ yields
\[
\Ehr_{\mathcal R_{\widehat G}}(z)
=\frac{\sum_{\vct{b}\in D(G)}z^{s(\vct{b})}}{(1-z)^{m+n+1}}
=\frac{F_G(z)}{(1-z)^{m+n+1}}.
\]
By Lemma \ref{lem:dimension-root-polytope}, $\dim\mathcal R_{\widehat G}=m+n$. Recall that the Ehrhart series of a $d$-dimensional lattice polytope $P$ is given by
\[
\Ehr_P(z)=\frac{h_P^*(z)}{(1-z)^{d+1}}.
\]
Since the denominator in \eqref{eq:root-ehrhart} is $(1-z)^{(m+n)+1}$, the uniqueness of the numerator proves \eqref{eq:hstar-F}.
\end{proof}

In \cite{KalmanPostnikov2017}, K\'alm\'an and Postnikov study the $h^*$-polynomials of root polytopes associated to bipartite graphs and show that they coincide with \emph{interior polynomials} of corresponding \emph{hypergraphs}. Later in \cite{Kalman2013RootPP}, K\'alm\'an and T\'othm\'er\'esz provide a method for computing the coefficients of these interior polynomials in terms of \emph{activities} of \emph{dissecting spanning trees} of a graph. Thus, our result provides a new interpretation and simplifies the computation of the $h^*$-polynomials of these root polytopes.  

We note that Theorem \ref{thm:F_G-h-polynomial} can also be proved using the hypergraph framework introduced  in \cite{KalmanPostnikov2017} as follows. View the left vertices $\ell_0,\ell_1,\dots,\ell_m$ as \emph{hyperedges} and order them in this order. A \emph{hypertree} $(a_0,a_1,\dots, a_m)$ is then a sequence such that $(a_1,\dots, a_m)$ is a $G$-draconian sequence, and $a_0 + a_1 + \cdots + a_m = n.$ Under this correspondence, the number of \emph{internally inactive} hyperedges is precisely $s(a_1,\dots, a_m)$, the number of supports of $(a_1,\dots, a_m).$ Thus, the correspondence between hypertrees and draconian sequences directly identifies the interior polynomial with the support-enumerator.

Interpreting these enumerators as $h^*$-polynomials reveals their connections to a broad range of combinatorial objects. Note that the interior polynomial of a hypergraph generalizes the specialization $T(x,1)$ of the Tutte polynomial \cite{Tutte1954}, a fundamental invariant that encodes a wealth of combinatorial information about graphs; see, for example, the survey \cite{Ellis-nonaghanMerino2011}. When the augmented graph $\widehat G$ is \textit{planar}, meaning that it admits a drawing in $\mathbb{R}^2$ in which no two edges cross, the corresponding interior polynomial is also related to the \textit{HOMFLY polynomials} of \textit{special alternating links} \cite{Kalman2013RootPP} and coincides with the \textit{parking function enumerator} of $\widehat G^*$ \cite[Corollary 5.9]{KalmanPostnikov2017} studied in \cite{PostnikovBoris2004}. Moreover, recent work connects root polytopes to toric geometry \cite{Rietsch_Williams_2025} and toric edge ideals \cite{AlmousaEtal2025}. Hence, support-enumerators provide a bridge linking these different combinatorial and algebraic objects, raising the question of what information about them is encoded by the support-enumerators.

Recall from Section \ref{sec:Ehrhart-theory} that the $h^*$-polynomial $h^*_P(z)$ of a lattice polytope $P$ can be used for computing its Ehrhart polynomial as shown in \eqref{eq:h-polynomial-Ehrhart-Polynomial}, and the sum of the coefficients of $h^*_P(z)$ equals $\NVol(P)$. Hence, we obtain the following simple calculation of $\NVol(R_{\widehat G}).$

\begin{corollary}\label{cor:normalized-volume-root-polytope}
    The normalized volume of the root polytope $R_{\widehat G}$ is given by 
    $$\NVol(R_{\widehat G}) = F_G(1) = |D(G)|.$$
\end{corollary}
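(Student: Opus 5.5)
This corollary follows by combining Theorem~\ref{thm:F_G-h-polynomial} with the general fact from Section~\ref{sec:Ehrhart-theory} that $h^*_P(1)=\NVol(P)$ for every lattice polytope $P$. The proof takes three short steps.

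First, check that $\mathcal R_{\widehat G}$ is a lattice polytope. Its vertices are among the integral points $\vct{e}_i+\vct{f}_j$. By Lemma~\ref{lem:dimension-root-polytope}, it has dimension $d=m+n$. Hence its Ehrhart polynomial has the binomial expansion \eqref{eq:h-polynomial-Ehrhart-Polynomial}, and its leading coefficient is the relative volume $\Vol(\mathcal R_{\widehat G})$, measured with respect to the lattice in its affine span.

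Second, evaluate at $z=1$. Comparing the leading coefficients of $t^d$ in \eqref{eq:h-polynomial-Ehrhart-Polynomial} gives $\Vol = (h^*_0+\cdots+h^*_d)/d!$, so $h^*_{\mathcal R_{\widehat G}}(1)=d!\,\Vol(\mathcal R_{\widehat G})=\NVol(\mathcal R_{\widehat G})$. By \eqref{eq:hstar-F}, $h^*_{\mathcal R_{\widehat G}}(z)=F_G(z)$, and therefore $\NVol(\mathcal R_{\widehat G})=F_G(1)$.

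Third, compute $F_G(1)$ directly from \eqref{eq:FG}:
\[
F_G(1)=\sum_{\vct a\in D(G)}1^{s(\vct a)}=|D(G)|.
\]

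None of these steps is a real obstacle. The only point needing care is that the volume normalization matches: the leading Ehrhart coefficient is the volume relative to the lattice $\aff(\mathcal R_{\widehat G})\cap\Z^{m+n+2}$, and that is exactly the notion of $\Vol$ used in the definition of $\NVol$ in Section~\ref{sec:prelim}. Theorem~\ref{thm:F_G-h-polynomial} was proved using the denominator exponent $\dim+1=m+n+1$, so the identification with the $h^*$-polynomial is consistent with this normalization.
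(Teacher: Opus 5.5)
Your proposal is correct and follows the paper's proof: the first equality combines $h^*_{\mathcal R_{\widehat G}}(1)=\NVol(\mathcal R_{\widehat G})$ with Theorem~\ref{thm:F_G-h-polynomial}, and the second is immediate from \eqref{eq:FG}. Your extra check of the leading coefficient of the binomial expansion \eqref{eq:h-polynomial-Ehrhart-Polynomial} just spells out the general fact the paper cites from Section~\ref{sec:Ehrhart-theory}.
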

\begin{proof}
    The first equality follows from the fact that the sum of the coefficients of $h^*_{R_{\widehat G}}(z) = F_G(z)$ equals $\NVol(R_{\widehat G})$. The second equality follows directly from the definition of $F_G(z)$ in \eqref{eq:FG}.
\end{proof}

\subsection{Duality}

By exploiting the symmetry of the root polytope $R_{\widehat G}$, we obtain a proof of the duality property for support-enumerators stated in Theorem \ref{thm:duality-support-enumerator}.
\begin{proof}[Proof of Theorem \ref{thm:duality-support-enumerator}]
We define the map $T:\R^{m+1}\oplus\R^{n+1}\longrightarrow \R^{n+1}\oplus\R^{m+1}$ by
$$T(x_0,x_1,\ldots,x_m,y_0,y_1,\ldots,y_n) = (y_0,y_1,\ldots,y_n,x_0,x_1,\ldots,x_m).$$ It is clear that $T$ is an invertible transformation. Since $T(\vct{e}_i + \vct{f}_j) = \vct{f}_j + \vct{e}_i \in \R^{n+1}\oplus \R^{m+1}$ for all $i$ and $j$, the map $T$ also defines a bijection from $R_{\widehat G}$ to $R_{\widehat {G^*}}$ and preserves their lattice points. By linearity, the map $T$ defines an invertible affine transformation from $\aff(R_{\widehat {G}})$ to $\aff(R_{\widehat {G^*}})$. Hence, $R_{\widehat {G}}$ is integrally equivalent to $R_{\widehat {G^*}}.$ This implies that $h^*_{\mathcal R_{\widehat G}}(z)
=h^*_{\mathcal R_{\widehat{G^*}}}(z).$ Applying Theorem \ref{thm:F_G-h-polynomial} to $G$ and $G^*$ gives
$F_G(z)=h^*_{\mathcal R_{\widehat G}}(z)
=h^*_{\mathcal R_{\widehat{G^*}}}(z)
=F_{G^*}(z).$
\end{proof}

As mentioned in Section \ref{sec:intro}, Theorem \ref{thm:duality-support-enumerator} proves a conjecture of Athanasiadis and Chapoton in \cite{chapoton2026} concerning the $h$-polynomials of preorders. We briefly review their study of \emph{preorder polytopes} and explain how Theorem \ref{thm:duality-support-enumerator} establishes their conjecture.

A \emph{preorder} $\tau$ on the set $[n]$ can be viewed as a partial order on the set of blocks of a partition of $[n].$ We call these blocks \emph{vertices} of the preorder $\tau$, and write $a \leq b$ if $a$ lies in a vertex $A$ and $b$ lies in a vertex $B$ such that $A$ is less than or equal to $B$. A subset $I \subseteq [n]$ is an \emph{order ideal} of $\tau$ if for every $b \in I$, we have that $a \in I$ for all $a \leq b.$ The \emph{preorder polytope} associated to $\tau$, denoted by $\mathcal{Q}_\tau$, is defined to be the set of all points $\vct{x} \in \R^n$ such that $0 \leq x_i$ for all $i \in [n]$, and for every order ideal $I$ of $\tau$
\[\sum_{i \in I}x_i \leq |I|.\]
The $h$-\textit{polynomial} of $\tau$, denoted by $h(\tau,z)$, is defined by
\[h(\tau,z):= h_0(\tau) + h_1(\tau)z +\cdots +h_d(\tau)z^d\]
where $h_i(\tau)$ is the number of lattice points of $\mathcal{Q}_\tau$ having exactly $i$ positive coordinates. 
\begin{conjecture}[Athanasiadis--Chapoton \cite{chapoton2026}]
For every preorder $\tau$,
\[h(\tau,z) = h(\tau^*,z)\]
where $\tau^*$ is the \emph{dual} preorder of $\tau,$ i.e., $a \leq b$ in $\tau^*$ if and only if $b \leq a$ in $\tau.$
\end{conjecture}

Let $G_\tau$ be the bipartite graph on $n$ left vertices $\ell_1,\dots, \ell_n$ and $n$ right vertices $r_1, \dots, r_n$ such that $(\ell_i,r_j)$ is an edge of $G$ if $i \leq j$ in the preporder $\tau$. One can verify using Lemma \ref{lem:inequalities-P_G} that $\mathcal{Q}_\tau$ is precisely the polytope $P_{G_\tau}(1,\dots, 1).$ Thus, the $h$-polynomial $h(\tau,z)$ coincides with the support-enumerator $F_{G^*_\tau}(z)$ of $D(G^*_\tau)$. Moreover, since $G^*_\tau = G_{\tau^*}$, we have by Theorem \ref{thm:duality-support-enumerator} that
\[h(\tau,z) = F_{G^*_\tau}(z) = F_{G_{\tau^*}}(z) = F_{G^*_{\tau^*}}(z) = h(\tau^*,z),\]
proving the conjecture.

As a further consequence of Theorem \ref{thm:duality-support-enumerator}, we establish the following duality property for Minkowski sums of cross polytopes. 

\begin{corollary}\label{cor:duality-for-Q_G}
For every bipartite graph $G$, we have that
\[
|Q_G(1,\ldots,1)\cap\Z^n|
=|Q_{G^*}(1,\ldots,1)\cap\Z^m|.
\]
More strongly, the two polytopes have the same distribution of the number of nonzero coordinates:
\[
\sum_{\vct{x}\in Q_G\cap\Z^n}u^{s(\vct{x})}
=\sum_{\vct{y}\in Q_{G^*}\cap\Z^m}u^{s(\vct{y})}
\]
where $Q_G := Q_G(1,\dots, 1)$ and $Q_{G^*}:= Q_{G^*}(1,\dots,1).$
\end{corollary}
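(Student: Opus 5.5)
The plan is to refine the computation in the proof of Lemma \ref{lem:Q_G-F_G} so that it keeps track of the number of nonzero coordinates, and then apply Theorem \ref{thm:duality-support-enumerator}.

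First, I would recall from that proof that $\vct{x}\in Q_G(1,\dots,1)$ if and only if $(|x_1|,\dots,|x_n|)\in P_G(1,\dots,1)$. This follows from Lemma \ref{lem:inequalities-P_G} and Proposition \ref{prop:inequality-Q_G}. Next, fix an integral point $\vct{a}\in P_G(1,\dots,1)\cap\Z^n$. Its lattice preimages $\vct{x}$ under the coordinatewise absolute value are obtained by choosing a sign for each positive coordinate of $\vct{a}$. There are exactly $2^{s(\vct{a})}$ such preimages, and each of them satisfies $s(\vct{x})=s(\vct{a})$, because taking absolute values does not change which coordinates are nonzero. Summing over the fibers gives
\[
\sum_{\vct{x}\in Q_G\cap\Z^n}u^{s(\vct{x})}=\sum_{\vct{a}\in P_G(1,\dots,1)\cap\Z^n}2^{s(\vct{a})}u^{s(\vct{a})}.
\]
By Lemma \ref{lem: draconion-sequences-dual}, the index set $P_G(1,\dots,1)\cap\Z^n$ equals $D(G^*)$, so the right-hand side is $F_{G^*}(2u)$.

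Applying the same argument to $G^*$, and using $(G^*)^*=G$, gives
\[
\sum_{\vct{y}\in Q_{G^*}\cap\Z^m}u^{s(\vct{y})}=F_G(2u).
\]
Theorem \ref{thm:duality-support-enumerator} states that $F_G(z)=F_{G^*}(z)$ as polynomials. Substituting $z=2u$ then shows that the two generating functions agree, which is the stronger claim. Setting $u=1$ recovers the equality of lattice-point counts, which is the first claim.

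There is no serious obstacle, since the substantive work is already done by the duality theorem. The one point that needs care is checking that the fiber decomposition preserves the statistic $s$, so that the weighted count really is $F_{G^*}(2u)$ and not just $F_{G^*}(2)$. This holds because a coordinate of $\vct{x}$ is nonzero exactly when the corresponding coordinate of $(|x_1|,\dots,|x_n|)$ is positive.
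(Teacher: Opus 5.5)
Your proposal is correct and is essentially the paper's proof: you refine the sign-fiber count of Lemma~\ref{lem:Q_G-F_G} so that it tracks support size, which gives $F_{G^*}(2u)$ for $Q_G$ and $F_G(2u)$ for $Q_{G^*}$, and you conclude with Theorem~\ref{thm:duality-support-enumerator}. The only cosmetic difference is that you get the count equality by setting $u=1$, whereas the paper states it first directly as $F_{G^*}(2)=F_G(2)$.
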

\begin{proof}
Lemma \ref{lem:Q_G-F_G} and Theorem \ref{thm:duality-support-enumerator} immediately give
\[
|Q_G\cap\Z^n|=F_{G^*}(2) = F_G(2) =
|Q_{G^*}\cap\Z^m|.
\]

For the stronger assertion, every lattice point $\vct{a}\in P_G(1,\dots,1)$ with nonnegative coordinates contributes $2^{s(\vct{a})}$ lattice points $\vct{x} = (x_1,\dots, x_n)$ in $Q_G$ with $(|x_1|,\dots, |x_n|) = \vct{a}$. Moreover, all such points $\vct{x}$ have the same support size $s(\vct{a})$. Therefore, together with Theorem \ref{thm:duality-support-enumerator}, we deduce
\begin{align*}
    \sum_{\vct{x}\in Q_G\cap\Z^n}u^{s(\vct{x})}
=\sum_{\vct{a}\in D(G^*)}(2u)^{s(\vct{a})}
=F_{G^*}(2u)
= F_{G}(2u) = \sum_{\vct{a}\in D(G)}(2u)^{s(\vct{a})} = \sum_{\vct{x}\in Q_{G^*}\cap\Z^m}u^{s(\vct{x})}.
\end{align*}
\end{proof}

Note that Corollary \ref{cor:duality-for-Q_G} establishes duality properties for Minkowski sums of cross polytopes, analogous to those for Minkowski sums of simplices established in Lemma \ref{lem: draconion-sequences-dual} and Theorem \ref{thm:duality-support-enumerator}.

\begin{example}
    By Example \ref{ex:draconian-sequences} and the symmetry of cross polytopes, we have
    \begin{align*}
        Q_{G_1}(1,1,1)\cap \Z^2 &= \{(0,0), (\pm 1,0),(0,\pm 1),(\pm 2,0),(\pm 1,\pm 1),(0,\pm 2),(\pm 2,\pm 1),(\pm 1,\pm 2)\},\\
        Q_{G_1^*}(1,1)\cap \Z^3&=Q_{G_2}(1,1)\cap \Z^3 = \{(0,0,0), (\pm 1,0,0), (0,\pm 1,0),(0,0,\pm 1),(\pm 1,\pm 1,0),\\
        & \qquad \qquad \qquad\qquad\qquad (\pm 1,0,\pm 1),(0,\pm 1,\pm 1),(0,\pm 2,0)\},\\
        F_{G_1}(z) &= 1+4z+3z^2= F_{G^*_1}(z) = F_{G_2}(z).
    \end{align*}
Thus, we see that $|Q_{G_1}(1,\ldots,1)\cap\Z^n|
= 21 = F_{G_1}(2)= |Q_{G^*_1}(1,\ldots,1)\cap\Z^m| ,$ and
\[\sum_{\vct{x}\in Q_{G_1}\cap\Z^n}u^{s(\vct{x})}
= 1+8u+12u^2  =F_{G_1}(2u) = \sum_{\vct{x}\in Q_{G^*_1}\cap\Z^m}u^{s(\vct{x})}.
\]
\end{example}

\subsection{A formula for counting lattice points}

We fist establish a few auxiliary results and then use them to prove the formula for the number of lattice points in Theorem \ref{thm:general-formula-for-lattice-points}.

\begin{definition}
For $\vct{k}=(k_1,\ldots,k_m)\in\Z_{\ge0}^m$, we define the \emph{clone graph} $G^{(\vct{k})}$ of $G$ by replacing the left vertex $\ell_i$ with $k_i$ \emph{clones}
\[
\ell_{i,1},\ldots,\ell_{i,k_i},
\]
each having the same neighborhood $I_i$. If $k_i=0$, no clone of $\ell_i$ is present.
\end{definition}
By construction, the polytope $Q_{G^{(\vct{k})}}(1,\dots, 1)$ is simply a way to view $Q_{G}(k_1,\dots, k_m)$ as 
\[\underbrace{\diamondsuit_{I_1} + \cdots +\diamondsuit_{I_1}}_{k_1 \text{ terms }} + \cdots + \underbrace{\diamondsuit_{I_m} + \cdots + \diamondsuit_{I_m}}_{k_m \text{ terms }}.\]
Thus,
\begin{equation}
Q_{G^{(\vct{k})}}(1,\ldots,1)
=Q_G(k_1,\ldots,k_m).
\label{eq:clone-Q}
\end{equation}

\begin{lemma}\label{lem:group-sums-draconion-sequences} For $\vct{k}=(k_1,\ldots,k_m)\in\Z_{\ge0}^m$, let
\[
\vct{b}=(b_{i,s})_{1\le i\le m,\ 1\le s\le k_i}
\in\Z_{\ge0}^{k_1+\cdots+k_m},
\]
and define its \emph{group-sums}
\[
a_i:=\sum_{s=1}^{k_i}b_{i,s}.
\]
Then, we have that $\vct{b}\in D(G^{(\vct{k})})
$ if and only if $\vct{a}=(a_1,\ldots,a_m)\in D(G).$
\end{lemma}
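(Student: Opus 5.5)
The plan is to work directly with Definition \ref{def: draconion-sequence} applied to the clone graph $G^{(\vct{k})}$. The left vertices of $G^{(\vct{k})}$ are indexed by pairs $(i,s)$ with $1\le s\le k_i$, and clone $(i,s)$ has neighborhood $I_i$. A nonempty subset $J'$ of these indices determines the set $J:=\{i\mid (i,s)\in J' \text{ for some } s\}\subset[m]$. It also satisfies
\[
\bigcup_{(i,s)\in J'}I_i=\bigcup_{i\in J}I_i .
\]
So the right-hand side of each draconian inequality for $G^{(\vct{k})}$ depends only on the \emph{projection} $J$ of $J'$. The whole argument rests on this observation.

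For ``$\vct{b}\in D(G^{(\vct{k})})\Rightarrow \vct{a}\in D(G)$'', fix a nonempty $J\subset[m]$. First discard the indices $i\in J$ with $k_i=0$, since these contribute $a_i=0$ (an empty group-sum). Discarding them leaves the left side unchanged and can only shrink the union, so it suffices to handle the remaining ones. If none remain, the left side is $0$ and the inequality is trivial. Otherwise, take $J'$ to be the set of all clones of the remaining indices. Then
\[
\sum_{i\in J}a_i=\sum_{(i,s)\in J'}b_{i,s}\le\Bigl|\bigcup_{i\in J'\text{-proj}}I_i\Bigr|\le\Bigl|\bigcup_{i\in J}I_i\Bigr| .
\]
Here the first inequality is the draconian condition for $\vct{b}$, and the second is monotonicity of the union.

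For the converse, fix a nonempty $J'$ with projection $J$. Because every $b_{i,s}\ge 0$, a partial sum is at most the full group-sum, so
\[
\sum_{(i,s)\in J'}b_{i,s}\le\sum_{i\in J}\sum_{s=1}^{k_i}b_{i,s}=\sum_{i\in J}a_i\le\Bigl|\bigcup_{i\in J}I_i\Bigr|=\Bigl|\bigcup_{(i,s)\in J'}I_i\Bigr| .
\]
Nonnegativity of the entries holds automatically on both sides, since $\vct{a}$ is a sum of nonnegative entries.

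There is no real obstacle here; the argument is a direct comparison of the two families of inequalities. The only points needing care are the bookkeeping of indices $i$ with $k_i=0$, which have an empty clone set and a forced group-sum $a_i=0$, and the use of $b_{i,s}\ge0$ when bounding a partial sum by the full group-sum.
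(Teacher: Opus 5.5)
Your proof is correct and follows the paper's argument. In both directions you compare the draconian inequality for a set of clones with the one for its projection to $[m]$, using that clones share neighborhoods and that a partial sum of the nonnegative $b_{i,s}$ is bounded by the group-sums. Your explicit handling of indices with $k_i=0$ is a point the paper glosses over: there the clones' neighborhoods need not union to $\bigcup_{i\in S}I_i$, and the paper's inequality survives only by the monotonicity you invoke.
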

\begin{proof}
Assume first that $\vct{b}\in D(G^{(\vct{k})})$. For a subset $S\subset[m]$, take all clones belonging to groups indexed by $S$. Then, the sum of their corresponding draconian sequences is $\sum_{i\in S}a_i$, and the union of their neighbors is $\bigcup_{i\in S}I_i$. The inequality description of the draconian sequences gives
\[
\sum_{i\in S}a_i\le\left|\bigcup_{i\in S}I_i\right|.
\]
Hence, $\vct{a}\in D(G)$.

Conversely, assume $\vct{a}\in D(G)$ and let $U$ be any subset of the clones. Let
\[
S:=\{i\mid U\text{ contains at least one clone of }\ell_i\}.
\]
Then
\begin{align}\label{eq:clone-union}
    \sum_{(i,s)\in U}b_{i,s}
\le\sum_{i\in S}a_i
\le\left|\bigcup_{i\in S}I_i\right|.
\end{align}
The union in \eqref{eq:clone-union} is exactly the union of the neighbors of the clones in $U$, since all clones of a vertex have the same neighbors. Thus, we see that $\vct{b}$ satisfies the inequality description of the $G^{(\vct{k})}$-draconian sequences. Hence, $\vct{b} \in D(G^{(\vct{k})}).$
\end{proof}

For nonnegative integers $a$ and $k,$ we define
\begin{equation}
c(a,k):=\#(\vct{x}\in\Z^k \text{ such that } |x_1|+\cdots+|x_k|=a),
\label{eq:c-def}
\end{equation}
and set $c(0,0)=1$, and $c(a,0)=0$ for $a>0$.

\begin{lemma}\label{lem:explicit-c(a,k)} Let $a,k$ be nonnegative integers. Then, we have
\[
c(0,k)=1,
\]
and, for $a\ge1$,
\begin{equation}
c(a,k)=\sum_{r=1}^{\min(a,k)}2^r\binom{k}{r}\binom{a-1}{r-1}.
\label{eq:c-explicit}
\end{equation}
Equivalently,
\begin{equation}
c(a,k)=\sum_{\substack{b_1,\ldots,b_k\ge0\\b_1+\cdots+b_k=a}}2^{s(\vct{b})}.
\label{eq:c-compositions}
\end{equation}
\end{lemma}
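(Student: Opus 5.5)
The plan is to prove the composition form \eqref{eq:c-compositions} first, directly from the definition \eqref{eq:c-def}, and then derive the explicit formula \eqref{eq:c-explicit} from it by grouping terms according to the support size.

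\textbf{Step 1: the composition form.} Each $\vct{x}\in\Z^k$ with $|x_1|+\cdots+|x_k|=a$ determines its vector of absolute values $\vct{b}=(|x_1|,\ldots,|x_k|)$. This $\vct{b}$ is a weak composition of $a$ into $k$ nonnegative parts. Conversely, the preimage of a fixed $\vct{b}$ is obtained by choosing a sign independently for each positive coordinate, exactly as in the proof of Lemma \ref{lem:Q_G-F_G}. So the fiber has size $2^{s(\vct{b})}$, and summing over $\vct{b}$ gives \eqref{eq:c-compositions}.

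The boundary conventions are checked directly.
\begin{itemize}
\item For $k=0$, the only vector is the empty one, with sum $0$. This gives $c(0,0)=1$ and $c(a,0)=0$ for $a>0$, matching the convention and the empty-sum value of \eqref{eq:c-explicit}.
\item For $a=0$, the only solution is $\vct{x}=\vct{0}$, so $c(0,k)=1$ for every $k$.
\end{itemize}

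\textbf{Step 2: the explicit formula.} For $a\ge1$, partition the weak compositions $\vct{b}$ of $a$ by $r=s(\vct{b})$, the number of positive parts. Since $a\ge1$, we have $r\ge1$; and $r\le\min(a,k)$, because there are only $k$ positions and each positive part is at least $1$.

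For a fixed $r$, the count proceeds in two choices.
\begin{itemize}
\item Choose the support, a subset of $[k]$ of size $r$, in $\binom{k}{r}$ ways.
\item Choose the positive values on the support, which form a composition of $a$ into $r$ positive parts. By stars and bars there are $\binom{a-1}{r-1}$ of these.
\end{itemize}
Each such $\vct{b}$ contributes $2^r$, so summing over $r$ yields \eqref{eq:c-explicit}.

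No step here is a real obstacle; the argument is elementary. The only point needing care is the bookkeeping of the degenerate cases ($a=0$ or $k=0$), so that both displayed formulas agree with the stated conventions, including that the sum in \eqref{eq:c-explicit} is empty when $k=0$.
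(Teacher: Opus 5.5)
Your proposal is correct and takes essentially the paper's approach. The paper proves \eqref{eq:c-compositions} by the same sign-fiber argument, and gets \eqref{eq:c-explicit} by directly choosing the support size $r$, its positions, a composition of $a$ into $r$ positive parts, and the $2^r$ signs; you obtain \eqref{eq:c-explicit} by grouping the composition form by support size instead, and you also check the $a=0$ and $k=0$ conventions explicitly.
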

\begin{proof}
For \eqref{eq:c-explicit}, choose the number $r$ of nonzero coordinates, choose their locations in $\binom{k}{r}$ ways, write $a$ as an ordered sum of $r$ positive integers in $\binom{a-1}{r-1}$ ways, and choose the $r$ signs in $2^r$ ways.

For \eqref{eq:c-compositions}, fix the vector $\vct{b} = (b_1,\dots, b_k)$ of nonnegative coordinates satisfying $b_1+\cdots + b_k = a.$ Since $\vct{b}$ has $s(\vct{b})$ positive coordinates, exactly $2^{s(\vct{b})}$ integer vectors $\vct{x}$ satisfy $\vct{b} = (|x_1|,\dots, |x_k|)$. Summing over all weak compositions of $a$ proves the identity.
\end{proof}

\begin{example}
By viewing $k$ as an indeterminate, the function $c(a,k)$ is a polynomial in $k$. We list them for $a \in [6]$ here.
\begin{align*}
c(1,k) &= 2k, &
c(2,k) &= 2k^2,\\
c(3,k) &= \frac{4}{3}k^3+\frac{2}{3}k, & 
c(4,k) &= \frac{2}{3}k^4+\frac{4}{3}k^2,\\
c(5,k) &= \frac{4}{15}k^5+\frac{4}{3}k^3+\frac{2}{5}k, &
c(6,k) &= \frac{4}{45}k^6+\frac{8}{9}k^4+\frac{46}{45}k^2.
\end{align*}

 Notice that the coefficients of $c(a,k)$ are nonnegative and that $c(a,k)$ only contains the terms $k^r$ for $r$ having the same parity as $a.$ In Lemma \ref{lem:properties-of-c(a,x)}, we show that these observations hold for all nonnegative integers $a.$
\end{example}

\begin{proof}[Proof of Theorem \ref{thm:general-formula-for-lattice-points}]
Let $H:=G^{(\vct{k})}$. By \eqref{eq:clone-Q}, Lemma \ref{lem:Q_G-F_G}, and Corollary \ref{cor:duality-for-Q_G}, we have
\[
|Q_G(k_1,\ldots,k_m)\cap\Z^n|
=|Q_H(1,\ldots,1)\cap\Z^n|
=F_H(2)
=\sum_{\vct{b}\in D(H)}2^{s(\vct{b})}.
\]
Group the vectors $\vct{b}$ according to their group-sum vectors $\vct{a}$. By Lemma \ref{lem:group-sums-draconion-sequences}, the possible group-sum vectors are exactly the vectors $\vct{a}\in D(G)$. For a fixed $\vct{a}$, the choices in distinct clone groups are independent, and Lemma \ref{lem:explicit-c(a,k)} gives
\begin{align*}
\sum_{\substack{\vct{b}\in D(H)\\\sum_s b_{i,s}=a_i\ \forall i}}2^{s(\vct{b})}
&=\prod_{i=1}^m
\left(
\sum_{\substack{b_{i,1},\ldots,b_{i,k_i}\ge0\\\sum_s b_{i,s}=a_i}}
2^{s(b_{i,1},\ldots,b_{i,k_i})}
\right)\\
&=\prod_{i=1}^m c(a_i,k_i).
\end{align*}
Summing over $\vct{a}\in D(G)$ gives \eqref{eq:general-count}.
\end{proof}

\begin{remark}\label{rem:validity-of-negative-coefficients-Q_G}
    Similarly to the property of $P_G(y_1,\dots,y_m)$ described in Remark \ref{rem:validity-of-negative-coefficients-P_G}, when some of the integers $k_1, \dots, k_m$ are negative, the formula \eqref{eq:general-count} in Theorem \ref{thm:general-formula-for-lattice-points} may still be valid for $Q_G(k_1, \dots, k_m)$. More precisely, due to the symmetry of $Q_G(k_1,\dots, k_m)$, whenever $P_G(k_1,\dots, k_m)$ satisfies \eqref{eq:Minkowski-summand-P_G}, we also have that formula \eqref{eq:general-count} still holds for $Q_G(k_1,\dots, k_m)$.
\end{remark}

\subsection{Ehrhart Positivity}

Since scalar multiplication distributes over Minkowski sums, we have
\[
tQ_G(k_1, \dots, k_m)=Q_G(tk_1,\ldots,tk_m)
\]
for all $t \in \Z_{\geq 0}.$ Thus, Theorem~\ref{thm:general-formula-for-lattice-points} immediately gives its Ehrhart polynomial.

\begin{corollary} The Ehrhart polynomial of $Q_G(k_1, \dots, k_m)$ is given by
\begin{equation}
\sum_{\vct{a}\in D(G)}\prod_{i=1}^m c(a_i,tk_i)
\label{eq:ehrhart-formula}
\end{equation}
for all $t \in \Z_{\geq 0}.$
\end{corollary}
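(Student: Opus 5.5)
Since $tQ_G(k_1,\dots,k_m)=Q_G(tk_1,\dots,tk_m)$ for every $t\in\Z_{\ge0}$ and each $tk_i$ is a nonnegative integer, Theorem \ref{thm:general-formula-for-lattice-points} applies directly with $(k_1,\dots,k_m)$ replaced by $(tk_1,\dots,tk_m)$. This gives $|tQ_G(k_1,\dots,k_m)\cap\Z^n|=\sum_{\vct{a}\in D(G)}\prod_{i=1}^m c(a_i,tk_i)$ for all $t\ge 0$, which is the counting identity claimed in \eqref{eq:ehrhart-formula}.

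What remains is to check that this expression is genuinely the Ehrhart polynomial, i.e. a polynomial in $t$ agreeing with $i(Q_G(k_1,\dots,k_m),t)$ for all $t\in\Z_{\ge0}$. $D(G)$ is finite, being the lattice points of $P_{G^*}(1,\dots,1)$ by Lemma \ref{lem: draconion-sequences-dual}, so it suffices that each factor $c(a_i,tk_i)$ is a polynomial in $t$ for fixed $a_i$. For $a_i=0$ this is the constant $1$. For $a_i\ge1$, I will use the explicit form in Lemma \ref{lem:explicit-c(a,k)} and replace the upper limit $\min(a,k)$ by $a$. This is harmless because $\binom{k}{r}=0$ for $k<r\le a$ when $k$ is a nonnegative integer. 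It then shows $c(a,k)=\sum_{r=1}^{a}2^r\binom{k}{r}\binom{a-1}{r-1}$ is a polynomial in $k$ of degree $a$ that agrees with the counting definition for every integer $k\ge0$.

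One boundary case needs checking separately: $k_i=0$, or $t=0$. There $c(a,0)=0$ for $a>0$ and the polynomial formula also gives $0$, since every $\binom{0}{r}$ with $r\ge1$ vanishes, so the two agree. Hence the sum is a polynomial in $t$ matching $i(Q_G(k_1,\dots,k_m),t)$ at every nonnegative integer $t$. By uniqueness of the Ehrhart polynomial (a polynomial determined by infinitely many values), it equals \eqref{eq:ehrhart-formula}. The only subtle step is this polynomiality check, specifically the range-of-summation and $k=0$ issues.
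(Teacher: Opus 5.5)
Your proposal is correct and is essentially the paper's proof. The paper likewise observes that $tQ_G(k_1,\dots,k_m)=Q_G(tk_1,\dots,tk_m)$, since scaling distributes over Minkowski sums, and then invokes Theorem~\ref{thm:general-formula-for-lattice-points}. Your explicit polynomiality check of $c(a_i,tk_i)$ in $t$ (extending the sum to $r\le a$ and handling $k=0$) is correct; it fills in a step the paper leaves implicit, relying instead on the remark after Theorem~\ref{thm:general-formula-for-lattice-points} and on Lemma~\ref{lem:properties-of-c(a,x)}.
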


Next, we establish several properties of the function $c(a,k)$ that are key to proving the Ehrhart positivity of polytopes in this family.

\begin{lemma}\label{lem:properties-of-c(a,x)}
For every fixed integer $a\ge0$, the function $c(a,k)$ is a polynomial in $k$, $k \in \Z_{\geq 0}$, of degree $a$ satisfying the following properties.
\begin{enumerate}
\item\label{itm:nonnegative-coefficients-c(a,x)} Every coefficient of $c(a,k)$ is a nonnegative rational number.
\item\label{itm:leading-coefficient-c(a,x)} Its leading coefficient is $2^a/a!$.
\item\label{itm:parity-of-c(a,x)} The coefficient of $k^r$ is zero for all nonnegative integers $r$ with the same parity as $a+1$.
\end{enumerate}
\end{lemma}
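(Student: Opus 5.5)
The plan is to work with the generating function of $c(a,k)$ in $a$ rather than with the explicit sum \eqref{eq:c-explicit}. From the definition \eqref{eq:c-def}, the number of integers $x$ with $|x|=b$ has generating function $\sum_{b\ge0}\#\{x:|x|=b\}z^b = 1+2z/(1-z) = (1+z)/(1-z)$. Independence of the $k$ coordinates then gives
\[
\sum_{a\ge 0} c(a,k)z^a = \left(\frac{1+z}{1-z}\right)^{k}
\]
for every $k\in\Z_{\ge0}$; the case $k=0$ is consistent with the conventions $c(0,0)=1$ and $c(a,0)=0$ for $a>0$. Polynomiality of degree $a$ in $k$ is already visible from \eqref{eq:c-explicit}. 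Each term $2^r\binom{k}{r}\binom{a-1}{r-1}$ is a polynomial in $k$ of degree $r\le a$. One should check that the truncation at $\min(a,k)$ is harmless, which holds because $\binom{k}{r}=0$ for $r>k$. The only term of degree $a$ is $r=a$, contributing $2^a\binom{k}{a}$, so the leading coefficient is $2^a/a!$. This settles item \ref{itm:leading-coefficient-c(a,x)}.

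For items \ref{itm:nonnegative-coefficients-c(a,x)} and \ref{itm:parity-of-c(a,x)}, I will write
\[
\left(\frac{1+z}{1-z}\right)^k=\exp\bigl(k\,g(z)\bigr),\qquad g(z):=\log\frac{1+z}{1-z}=2\sum_{j\ge0}\frac{z^{2j+1}}{2j+1}.
\]
This identity holds as formal power series for integer $k$. Expanding the exponential gives
\[
c(a,k)=\sum_{r\ge0}\frac{k^r}{r!}\,[z^a]\,g(z)^r .
\]
This exhibits $c(a,k)$ as a polynomial in $k$ whose coefficient of $k^r$ is $[z^a]g(z)^r/r!$; it agrees with the counting definition for all $k\in\Z_{\ge0}$, and uniqueness of polynomial interpolation then identifies it with the polynomial above. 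The series $g$ has nonnegative rational coefficients, so every coefficient of $c(a,k)$ is a nonnegative rational number, which is item \ref{itm:nonnegative-coefficients-c(a,x)}. The series $g$ is odd, so $g^r$ contains only powers $z^a$ with $a\equiv r \pmod 2$. Hence the coefficient of $k^r$ vanishes whenever $r\not\equiv a\pmod 2$, which is item \ref{itm:parity-of-c(a,x)}. As a side check, $g^r$ starts at $z^r$, which again gives degree $a$ with top coefficient $2^a/a!$.

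The main obstacle is justifying the passage from integer $k$ to the polynomial identity. I will handle it by noting that, for fixed $a$, both sides of the expansion are polynomials in $k$ that agree at all nonnegative integers. At an integer $k$, the identity $\exp(k\,g(z))=((1+z)/(1-z))^k$ is a formal power series identity, because $\exp(g)=(1+z)/(1-z)$ formally and $\exp$ converts $k$-fold sums into $k$-th powers. Everything else is routine coefficient extraction.
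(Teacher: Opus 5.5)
Your proposal is correct and follows essentially the same route as the paper: you get polynomiality and the leading coefficient $2^a/a!$ from the explicit formula \eqref{eq:c-explicit}, then expand $\bigl(\frac{1+z}{1-z}\bigr)^k=\exp(k\,g(z))$ with $g$ odd and having nonnegative coefficients, and identify the resulting polynomial with $c(a,k)$ by agreement at infinitely many integers $k$. Your additional checks, that truncating the sum at $\min(a,k)$ is harmless and that $[z^a]g(z)^a=2^a$ independently confirms the top coefficient, are correct minor refinements of the same argument.
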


Note that property \ref{itm:parity-of-c(a,x)} in Lemma \ref{lem:properties-of-c(a,x)} implies that the polynomial  $c(a,k)$ is an odd function when $a$ is odd, and is an even function when $a$ is even. That is, we have $c(a,-k) = (-1)^ac(a,k)$ for all $a \in \Z_{\geq 0}$ and $k \in \Z_{\geq 0}.$

\begin{proof} For a fixed nonnegative integer $a,$ the formula \eqref{eq:c-explicit} in Lemma \ref{lem:explicit-c(a,k)} implies that $c(a,k)$ is a polynomial of degree $a$ in variable $k$ with rational coefficients and leading coefficient $2^a/a!$.

For an integer $k \geq 0$, it is not difficult to see from the definition of $c(a,k)$ in \eqref{eq:c-def} that 
\begin{equation}
\sum_{a\ge0}c(a,k)u^a
=\left(\frac{1+u}{1-u}\right)^k
\label{eq:d-generating}
\end{equation}
as each component of $\vct{x}$ with $|x_1| + \cdots |x_k| = a$ contributes the generating function \[1+2u+2u^2+\cdots=\frac{1+u}{1-u}.\]
We now treat $k$ as an indeterminate and write
\begin{align*}
\left(\frac{1+u}{1-u}\right)^k
&=\exp\left(k\log\frac{1+u}{1-u}\right)\\
&=\exp\left(2k\sum_{q\ge0}\frac{u^{2q+1}}{2q+1}\right).
\end{align*}
By setting
\[
B(u):=2\sum_{q\ge0}\frac{u^{2q+1}}{2q+1},
\]
we have
\[
[u^a]\exp(kB(u))
=\sum_{r=0}^a\frac{k^r}{r!}[u^a]B(u)^r.
\]
Since every coefficient of $B(u)$ is nonnegative, every coefficient of this polynomial in $k$ is also nonnegative. Moreover, because $B(u)$ is an odd function, every monomial of $B(u)^r$ has degree congruent to $r$ modulo $2$. Hence, $[u^a]B(u)^r=0$ unless $a\equiv r\pmod 2$, proving property \ref{itm:parity-of-c(a,x)}. For each fixed $a$, the coefficient of $u^a$ in \eqref{eq:d-generating} agrees with this polynomial for infinitely many integers $k$. Thus, it is exactly $c(a,k)$.
\end{proof}

Formula \eqref{eq:general-count} in Theorem \ref{thm:general-formula-for-lattice-points} is a multivariate polynomial in variables $k_1, \dots, k_m$. By Lemma \ref{lem:properties-of-c(a,x)}(\ref{itm:nonnegative-coefficients-c(a,x)}), every coefficient of this multivariate polynomial is nonnegative. We now show, as a further application of Lemma \ref{lem:properties-of-c(a,x)}, that this nonnegativity implies the Ehrhart positivity of Minkowski sums of cross polytopes as stated in Theorem \ref{thm:positivity}.
\begin{proof}[Proof of Theorem \ref{thm:positivity}] Let us write $Q:=Q_G(k_1,\dots, k_m)$ and define $$U:=\bigcup_{i:k_i>0}I_i.$$ Then, by Proposition \ref{prop:dim-of-Q_G}, we have $\dim Q=|U|=d$. By Lemma \ref{lem:properties-of-c(a,x)}(\ref{itm:nonnegative-coefficients-c(a,x)}), every factor $c(a_i,tk_i)$ has nonnegative coefficients in $t$. Therefore, every summand in \eqref{eq:ehrhart-formula}, and hence $i(Q,t)$ itself, has nonnegative coefficients. The constant term is $1$, contributed by the zero draconian sequence.

It remains to show that the coefficient of $t^r$ is positive for every $1\le r\le d$. Choose any subset $W\subset U$ with $|W|=r$. For each $j\in W$, choose a left vertex $i(j)$ such that $j\in I_{i(j)}$ and $k_{i(j)}>0$. We define
\[
a_i:=|\{j\in W\mid i(j)=i\}|.
\]
For every $S\subset[m]$, the quantity $\sum_{i\in S}a_i$ counts selected right vertices assigned to left vertices in $S$, and all such right vertices lie in $\bigcup_{i\in S}I_i$. Thus,
\[
\sum_{i\in S}a_i\le\left|\bigcup_{i\in S}I_i\right|.
\]
This implies $\vct{a}\in D(G)$. Moreover, $|\vct{a}|=r$, and $a_i>0$ only when $k_i>0$.

By Lemma \ref{lem:properties-of-c(a,x)}, the polynomial $c(a_i,tk_i)$ has degree $a_i$ with a positive leading coefficient. Hence, the term
\[
\prod_{i=1}^m c(a_i,tk_i)
\]
is a polynomial of degree $\sum_i a_i=r$ with a positive coefficient of $t^r$. Since every coefficient in every other summand is nonnegative, the coefficient of $t^r$ in the Ehrhart polynomial given in \eqref{eq:ehrhart-formula} is strictly positive. This holds for every $1\le r\le d$.
\end{proof}

\begin{remark}\label{rem:other-formulas}
    The polytopes $Q_G(k_1, \dots, k_m)$ form a subfamily of type B generalized permutohedra (see \cite{Euretall2024}, \cite{Bastidas2021}, and \cite{thawinrak2025AtoB} for comprehensive treatments of type B generalized permutohedra). Consequently, the two equivalent formulas given in \cite[Theorem A]{Euretall2024} and \cite[Corollary 4.11]{thawinrak2025AtoB} for counting the lattice points in type B generalized permutohedra can be applied to $Q_G(k_1, \dots,k_m).$ However, neither formula makes the Ehrhart positivity of $Q_G(k_1, \dots,k_m)$ particularly transparent. Cross polytopes provide an additional illustration of this issue. For example,
\[
\diamondsuit_{[2]}
=
\Delta^0_{[2]}
+\Delta^0_{\{\Bar{1},\Bar{2}\}}
+\Delta^0_{\{\Bar{1},2\}}
-\Delta^0_{\{1\}}
-\Delta^0_{\{\Bar{1}\}}
-\Delta^0_{\{2\}}
-\Delta^0_{\{\Bar{2}\}},
\]
where $\Delta^0_I:=\conv(\vct{0},\vct{e}_i\mid i\in I)$ and $\vct{e}_{\Bar{i}}:=-\vct{e}_i$ for $i\in[n]$, is expressed as a Minkowski sum of simplices with some negative coefficients. Such negative coefficients make the Ehrhart positivity even less apparent when applying the two formulas. In contrast, our formula in \eqref{eq:ehrhart-formula} is better suited to the family $Q_G(k_1,\dots,k_m),$ as it makes the Ehrhart positivity apparent.
\end{remark}

\subsection{Other consequences}

We denote by $D^0(G)$ the set of $G$-draconian sequences $\vct{a}$ whose sum of its components $|\vct{a}|$ is an even integer, and by $D^1(G)$ the set of $G$-draconian sequences whose sum $|\vct{a}|$ of components is an odd integer. Additionally, we define
\[F_{G,0}(z):= \sum_{\vct{a} \in D^0(G)}z^{s(\vct{a})} \quad \text{and} \quad F_{G,1}(z):= \sum_{\vct{a} \in D^1(G)}z^{s(\vct{a})}\]
to be the support-enumerators of $D^0(G)$ and $D^1(G),$ respectively. Clearly, we have $F_G(z) = F_{G,0}(z) + F_{G,1}(z)$. The following two corollaries employ Lemma \ref{lem:properties-of-c(a,x)} to give formulas for the numbers of lattice points in the interior and the boundary of $Q_G(k_1, \dots, k_m)$ in terms of $D^0(G), D^1(G), F_{G,0}(z),$ and $F_{G,1}(z).$ 

\begin{corollary}
Suppose that $Q_G(k_1,\ldots,k_m)$ is $d$-dimensional. Then, the number of lattice points in the (relative) interior of $Q_G(k_1,\ldots,k_m)$ is given by
\begin{equation}
(-1)^d\left(
\sum_{\vct{a}\in D^0(G)}\prod_{i=1}^m c(a_i,k_i)
-
\sum_{\vct{a}\in D^1(G)}\prod_{i=1}^m c(a_i,k_i)
\right).
\label{eq:interior}
\end{equation}
In particular, the number of lattice points in the (relative) interior of $Q_G(1,\dots, 1)$ equals $$(-1)^d(F_{G,0}(2)-F_{G,1}(2)).$$
\end{corollary}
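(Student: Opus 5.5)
The plan is to apply Ehrhart reciprocity (property \ref{itm: recipocity} in Section \ref{sec:Ehrhart-theory}) to the explicit Ehrhart polynomial \eqref{eq:ehrhart-formula}, and then use the parity property of $c(a,k)$ from Lemma \ref{lem:properties-of-c(a,x)}. Write $Q:=Q_G(k_1,\dots,k_m)$ and $d=\dim Q$. By reciprocity, the number of lattice points in the relative interior of $tQ$ equals $(-1)^d i(Q,-t)$. Setting $t=1$, the interior count of $Q$ is $(-1)^d i(Q,-1)$. Here $i(Q,t)=\sum_{\vct a\in D(G)}\prod_i c(a_i,tk_i)$ holds as a polynomial identity in $t$: both sides are polynomials agreeing at all $t\in\Z_{\ge 0}$. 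Hence $i(Q,-1)=\sum_{\vct a\in D(G)}\prod_i c(a_i,-k_i)$, where $c(a_i,\cdot)$ denotes the polynomial extension in its second argument.

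Next I evaluate $c(a_i,-k_i)$. By Lemma \ref{lem:properties-of-c(a,x)}(\ref{itm:parity-of-c(a,x)}), only powers $k^r$ with $r\equiv a\pmod 2$ occur in $c(a,k)$, so $c(a,-k)=(-1)^a c(a,k)$ as polynomials. For $a=0$ we have $c(0,k)=1$, which is consistent with this. One boundary case needs care: for $k_i=0$ the convention $c(a,0)=0$ when $a>0$ must match the polynomial value at $0$. It does, since the explicit formula \eqref{eq:c-explicit} gives $0$ at $k=0$ for $a\ge1$, so no discrepancy arises. Therefore
\[
\prod_{i=1}^m c(a_i,-k_i)=(-1)^{|\vct a|}\prod_{i=1}^m c(a_i,k_i).
\]

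Splitting the sum according to the parity of $|\vct a|$, that is, over $D^0(G)$ and $D^1(G)$, gives exactly \eqref{eq:interior} after multiplying by $(-1)^d$. For the special case $k_1=\dots=k_m=1$, note that $c(a,1)=2$ for $a\ge1$ (only $r=1$ contributes, giving $2\binom{1}{1}\binom{a-1}{0}$), and $c(0,1)=1$. So $\prod_i c(a_i,1)=2^{s(\vct a)}$, and the two sums become $F_{G,0}(2)$ and $F_{G,1}(2)$.

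The only delicate point is justifying that \eqref{eq:ehrhart-formula} is the Ehrhart polynomial as a genuine polynomial in $t$, so that evaluating it at $t=-1$ is legitimate. This needs $Q$ to be a lattice polytope, which holds for integer $k_i\ge 0$. It also needs each $c(a_i,tk_i)$ to be polynomial in $t$, including when $k_i=0$; this is fine because $c(a,0\cdot t)$ is then the constant $c(a,0)$. Everything else is bookkeeping.
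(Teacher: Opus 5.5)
Your proposal is correct and follows the paper's proof: reciprocity gives $(-1)^d i(Q,-1)$, the parity identity $c(a,-k)=(-1)^a c(a,k)$ from Lemma \ref{lem:properties-of-c(a,x)} splits the sum over $D^0(G)$ and $D^1(G)$, and the values $c(0,1)=1$ and $c(a,1)=2$ for $a\ge 1$ give the $Q_G(1,\dots,1)$ case. Your checks that the formula holds as a polynomial identity in $t$ and that the convention $c(a,0)=0$ agrees with the polynomial are details the paper leaves implicit.
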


\begin{proof}
By the reciprocity property of Ehrhart polynomials, the number of lattice points in the (relative) interior of $Q:=Q_G(k_1, \dots, k_m)$ equals $(-1)^d i(Q,-1)$. Since $c(a,-k) = (-1)^ac(a,k)$, substituting $t=-1$ into \eqref{eq:ehrhart-formula} and separating the terms according to the parity of $|\vct{a}|$ gives \eqref{eq:interior}.

It is straightforward to check that
\begin{align}\label{eq: c(a,1)}
c(a_i,1)=
    \begin{cases}
       1 &\text{ if } a_i = 0\\
       2 &\text{ if } a_i \geq 1
    \end{cases}.
\end{align}
Thus, plugging \eqref{eq: c(a,1)} into \eqref{eq:interior} gives $(-1)^d(F_{G,0}(2)-F_{G,1}(2))$ for the number of lattice points in the (relative) interior of $Q_G(1,\dots, 1)$.
\end{proof}

Subtracting the lattice points in the interior from all the lattice points in $Q_G(k_1, \dots, k_m)$ gives the following boundary formula.

\begin{corollary}
Suppose that $Q_G(k_1,\ldots,k_m)$ is $d$-dimensional. Then, the number of lattice points on the boundary of $Q_G(k_1,\ldots,k_m)$ is given by
\begin{equation}
\begin{cases}
2\displaystyle\sum_{\vct{a}\in D^1(G)}\prod_{i=1}^m c(a_i,k_i),&d\text{ is even},\\[8pt]
2\displaystyle\sum_{\vct{a}\in D^0(G)}\prod_{i=1}^m c(a_i,k_i),&d\text{ is odd}.
\end{cases}
\label{eq:boundary}
\end{equation}
In particular, the number of lattice points on the boundary of $Q_G(1,\dots, 1)$ is given by
\begin{equation}
\begin{cases}
2F_{G,1}(2),&d\text{ is even},\\
2F_{G,0}(2),&d\text{ is odd}.
\end{cases}
\label{eq:boundary-111}
\end{equation}
\end{corollary}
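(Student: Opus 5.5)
The plan is to subtract the interior count from the total count. Theorem \ref{thm:general-formula-for-lattice-points} gives the total number of lattice points in $Q_G(k_1,\ldots,k_m)$. Splitting $D(G)=D^0(G)\sqcup D^1(G)$ according to the parity of $|\vct{a}|$, and writing $S_0:=\sum_{\vct{a}\in D^0(G)}\prod_i c(a_i,k_i)$ and $S_1:=\sum_{\vct{a}\in D^1(G)}\prod_i c(a_i,k_i)$, the total count is $S_0+S_1$. The preceding corollary gives the number of relative interior lattice points as $(-1)^d(S_0-S_1)$.

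Since $Q_G(k_1,\ldots,k_m)$ is $d$-dimensional, its boundary means its relative boundary. Every lattice point lies either in the relative interior or on this boundary, and not in both, so the boundary count is $(S_0+S_1)-(-1)^d(S_0-S_1)$.

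The remainder is a case split on the parity of $d$. If $d$ is even, the count is $(S_0+S_1)-(S_0-S_1)=2S_1$. If $d$ is odd, it is $(S_0+S_1)+(S_0-S_1)=2S_0$. Both cases match \eqref{eq:boundary}.

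For the specialization to $Q_G(1,\dots,1)$, I substitute $k_i=1$ and use \eqref{eq: c(a,1)}, namely $c(0,1)=1$ and $c(a,1)=2$ for $a\ge1$. Then $\prod_i c(a_i,1)=2^{s(\vct{a})}$, so $S_j=F_{G,j}(2)$ for $j=0,1$, which gives \eqref{eq:boundary-111}.

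I expect no real obstacle. The only point to check is that the interior formula applies to the given $(k_1,\ldots,k_m)$ with dimension $d$, and that "boundary" is read relative to the affine hull. Both hold by the hypotheses of the preceding corollary and Proposition \ref{prop:dim-of-Q_G}.
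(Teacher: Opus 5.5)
Your proposal is correct and takes the same approach as the paper. The paper also obtains the boundary count by subtracting the interior count $(-1)^d(S_0-S_1)$ from the total $S_0+S_1$, splitting on the parity of $d$, and specializing to $Q_G(1,\dots,1)$ via $c(a,1)=2$ for $a\ge 1$.
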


Lastly, we give the following formula for the surface volume.

\begin{corollary}
Suppose that $Q_G(k_1,\ldots,k_m)$ is $d$-dimensional. Then, the surface volume of $Q_G(k_1,\ldots,k_m)$ is given by
\begin{equation}
\SVol\bigl(Q_G(k_1,\ldots,k_m)\bigr)
=2^d\sum_{\substack{\vct{a}\in D(G)\\|\vct{a}|=d-1}}
\frac{k_1^{a_1}\cdots k_m^{a_m}}{a_1!\cdots a_m!}.
\label{eq:surface-volume}
\end{equation}
\end{corollary}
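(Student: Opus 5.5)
The plan is to read off the second-leading coefficient of the Ehrhart polynomial of $Q:=Q_G(k_1,\dots,k_m)$. By property \ref{itm: second-leading-coefficient} in Section \ref{sec:Ehrhart-theory}, the coefficient of $t^{d-1}$ in $i(Q,t)$ equals $\SVol(Q)/2$, where $d=\dim Q$. So it suffices to show that this coefficient equals
\[
2^{d-1}\sum_{\substack{\vct{a}\in D(G)\\|\vct{a}|=d-1}}\frac{k_1^{a_1}\cdots k_m^{a_m}}{a_1!\cdots a_m!}.
\]
Here we assume that the $k_i$ are nonnegative integers, so that $Q$ is a lattice polytope and the Ehrhart formula \eqref{eq:ehrhart-formula} applies.

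First, reduce to the case where every right vertex has positive degree among the left vertices with $k_i>0$. Delete the left vertices with $k_i=0$; this does not change $Q$, and it does not change the relevant sum, since $c(a_i,0)=0$ for $a_i>0$ and $k_i^{a_i}=0$ for $a_i>0$. Then delete the right vertices of degree zero, as in the proof of Proposition \ref{prop:volume-Q_G}. These deletions do not change $D(G)$ restricted to the relevant supports. After the reduction, $d=|U|=n'$, where $n'$ is the number of remaining right vertices. Since the union of all neighborhoods has size $d$, every $\vct{a}\in D(G)$ satisfies $|\vct{a}|\le d$.

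Next, expand each summand of \eqref{eq:ehrhart-formula} as a polynomial in $t$. By Lemma \ref{lem:properties-of-c(a,x)}, $c(a_i,tk_i)$ has degree $a_i$ in $t$ with leading term $\frac{2^{a_i}}{a_i!}k_i^{a_i}t^{a_i}$. Its $t^{a_i-1}$ coefficient vanishes by the parity property \ref{itm:parity-of-c(a,x)}. Hence $\prod_i c(a_i,tk_i)$ has degree $|\vct{a}|$, and its coefficient of $t^{|\vct{a}|-1}$ is zero, because any lowering of the total degree by exactly one would require some factor to drop by one degree. Two kinds of sequences could contribute to $t^{d-1}$:
\begin{itemize}
\item those with $|\vct{a}|=d$, whose $t^{d-1}$ coefficient is zero by the parity argument;
\item those with $|\vct{a}|=d-1$, each contributing its leading coefficient $2^{d-1}\prod_i k_i^{a_i}/a_i!$.
\end{itemize}
Sequences with $|\vct{a}|\le d-2$ have degree too small to matter. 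Summing the contributions and multiplying by $2$ gives exactly \eqref{eq:surface-volume}.

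The main obstacle is the bookkeeping in the reduction step. I need to justify that the surface volume is unaffected by the ambient zero coordinates, with volume measured relative to the lattice in each facet's affine span. I also need to confirm that restricting $D(G)$ to sequences supported on left vertices with $k_i>0$ is harmless. The vanishing of the $t^{|\vct{a}|-1}$ coefficient is the key new ingredient, and it follows directly from the parity property in Lemma \ref{lem:properties-of-c(a,x)}.
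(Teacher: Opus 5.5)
Your proposal is correct and follows the paper's proof: you read off $[t^{d-1}]\,i(Q,t)=\SVol(Q)/2$ from \eqref{eq:ehrhart-formula}, discard sequences with $|\vct{a}|\le d-2$ by degree, eliminate $|\vct{a}|=d$ via the parity property of Lemma \ref{lem:properties-of-c(a,x)}, and take leading coefficients for $|\vct{a}|=d-1$. Your preliminary deletion of the left vertices with $k_i=0$ is a genuine small improvement. The paper simply asserts $|\vct{a}|\le d$ for all $\vct{a}\in D(G)$, which fails when some $k_i=0$, although the offending terms vanish anyway because $c(a,0)=0$ for $a>0$.
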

\begin{proof}
Let $Q:=Q_G(k_1,\ldots,k_m)$. Then,
\[
[t^{d-1}]\,i(Q,t)=\SVol(Q)/2.
\]
Note that every $\vct{a} \in D(G)$ satisfies $|\vct{a}| \leq d$. The Ehrhart polynomial of $Q$ given in \eqref{eq:ehrhart-formula} shows that every $\vct{a}$ satisfying $|\vct{a}|\le d-2$ does not contribute to the coefficient of $t^{d-1}$. Moreover, by Lemma \ref{lem:properties-of-c(a,x)}(\ref{itm:parity-of-c(a,x)}), every $\vct{a}$ such that $|\vct{a}|=d$ also does not contribute. Thus, only $\vct{a}$ such that $|\vct{a}|=d-1$ contributes. Furthermore, Lemma \ref{lem:properties-of-c(a,x)}(\ref{itm:leading-coefficient-c(a,x)}) implies that every such $\vct{a}$ contributes to the coefficient of $t^{d-1}$, and gives the displayed sum in \eqref{eq:surface-volume} after multiplying by $2$.
\end{proof}

\section{Further Questions}\label{sec:further-questions}
Our study of Minkowski sums of cross polytopes reveals several intriguing combinatorial properties of these polytopes. The techniques developed in this paper also suggest several possible directions for future research. Since many aspects of this family of polytopes also remain largely unexplored, we present the following problems to highlight some potential research directions.

\begin{problem}
Let $P_1,\dots, P_m$ be polytopes in $\R^n$. The \emph{Cayley polytope} of $P_1,\dots,P_m$ is 
\[\mathcal{C}(P_1,\dots, P_m):=\conv( (\vct{e}_i,P_i)\mid i \in [m]) \subset \R^{m}\oplus\R^n.\]
When $P_i$ is a simplex $\Delta_{I_i} :=\conv(\vct{e}_j\mid j \in I_i)$, one can show that we recover the definition of a root polytope. Recall that, in Section \ref{sec:lttice-point-enumeration}, we show that a support-enumberator is the $h^*$-polynomial of the corresponding root polytope by manipulating its Ehrhart series. Can this technique be used to give a simple calculation of the $h^*$-polynomials of some other Cayley polytopes?
\end{problem}

\begin{problem}
In \cite{chapoton2026}, Athanasiadis and Chapoton associate to each preorder a corresponding poset, which is equivalent to our definition of $D(G)$, and use this structure to compute the $h^*$-polynomial of the corresponding Minkowski sum of simplices. Can their results be extended to the entire family? Can we generalize the technique for computing the $h^*$-polynomials of Minkowski sums of cross polytopes? Additionally, for which instances do these $h^*$-polynomials admit especially simple or explicit formulas?
\end{problem}

\begin{problem}
    In \cite{chapoton2026}, Athanasiadis and Chapoton define the $h$-polynomial of a preposet, which is equivalent to our notion of a support-enumerator, and formulate several conjectures concerning its properties. In particular, they conjecture that the $h$-polynomial of a preorder is \emph{unimodal} and \emph{palindromic}, and only has real roots. It is easy to see that some of these conjectures, such as palindromicity, do not hold for support-enumerators in general. Nevertheless, this raises the following natural question: For which bipartite graphs $G$ do the support-enumerators of $D(G)$ satisfy the conjectures proposed in \cite{chapoton2026}?
\end{problem}

\begin{problem}
The Minkowski sums of cross polytopes studied in this paper possess rich combinatorial structures beyond their lattice-point enumeration. 
In particular, one may consider their face posets. Note that the face structure of the polytopes in a subfamily of Minkowski sums of simplices is investigated in \cite{PostnikovEtal2008} by Postnikov, Reiner, and Williams through type A generalized permutohedra. 
Given their close connection to Minkowski sums of simplices, can one apply similar tools and techniques in \cite{PostnikovEtal2008} to study the faces of Minkowski sums of cross polytopes? In particular, can one give a formula for their $f$- and \emph{$h$-vectors} (for those simple polytopes)? 
\end{problem}

\section*{Acknowledgement}
We thank Yibo Gao for organizing the PKU Algebraic Combinatorics Experience (PACE 2026) at Beijing International Center for Mathematical Research, Peking University. The research program brought the authors together and made possible the collaboration that led to this work, as well as future joint projects.

We also thank Athanasiadis and Chapoton for their comments, which helped improve the exposition of this paper.



\bibliographystyle{abbrv}
\bibliography{BibContainer}

\begin{description}
        \item[Ziyi Dai\hypertarget{author1}{\textsuperscript{1}}] Department of Mathematics, Peking University, Beijing, China. \\ \texttt{E-mail: 1909506735@qq.com}
    \item[Qilin Hou\hypertarget{author2}{\textsuperscript{2}}] Department of Mathematics, Southern University of Science and Technology, Shenzhen, China.\\ 
    \texttt{E-mail: houql2023@mail.sustech.edu.cn}
    \item[Zhiyuan Liu\hypertarget{author3}{\textsuperscript{3}}] Department of Mathematics, Southern University of Science and Technology, Shenzhen, China. \\ \texttt{E-mail: kracyliu050403@gmail.com}
    \item[Warut Thawinrak\hypertarget{author4}{\textsuperscript{4}}] Beijing International Center for Mathematical Research, Beijing, China. \\ \texttt{E-mail: warutthawinrak@gmail.com}
    \item[Hongyu Wang\hypertarget{author5}{\textsuperscript{5}}] Department of Mathematical Sciences, Tsinghua University, Beijing, China. \\ \texttt{E-mail: hongyuwang2024@outlook.com}
\end{description}

\end{document}